\theoremstyle{plain}
\newtheorem{theorem}{Theorem}
\newtheorem{lemma}[theorem]{Lemma}
\newtheorem{corollary}[theorem]{Corollary}
\numberwithin{theorem}{section}
\numberwithin{equation}{section}
\newcommand{\B}{\mathbb{B}}
\newcommand{\C}{\mathbb{C}}
\newcommand{\N}{\mathbb{N}}
\newcommand{\R}{\mathbb{R}}
\newcommand{\T}{\mathbb{T}}
\newcommand{\Z}{\mathbb{Z}}
\newcommand{\cH}{\mathcal{H}}
\newcommand{\GL}{\mathrm{GL}}
\newcommand{\SL}{\mathrm{SL}}
\newcommand{\SU}{\mathrm{SU}}
\newcommand{\U}{\mathrm{U}}
\newcommand{\Spe}{\mathrm{S}}
\newcommand{\im}{\mathrm{Im}}
\newcommand{\wG}{\widetilde{G}}
\newcommand{\wH}{\widetilde{H}}
\newcommand{\ip}[2]{\left< #1,#2 \right>}
\newcommand{\ind}{\mathrm{ind}}
\newcommand{\ev}{\mathrm{ev}}
\newcommand{\id}{\mathrm{id}}
\newcommand{\HBlam}{\mathcal{H}^2_\lambda(\B^n)}
\newcommand{\Llam}{L^2_\lambda(\B^n,\mu_\lambda)}
\newcommand{\Blam}{\mathcal{B}_\lambda}
\newcommand{\Dlambda}{D_\lambda}
\newcommand{\cF}{\mathcal{F}}
\def\sideremark#1{\ifvmode\leavevmode\fi\vadjust{\vbox to0pt{\vss \hbox to 0pt{\hskip\hsize\hskip1em\vbox{\hsize2cm\tiny\raggedright\pretolerance10000  \noindent #1\hfill}\hss}\vbox to8pt{\vfil}\vss}}}
\begin{document}
\title[Restriction Principle and Topelitz Operators]{The Restriction Principle and Commuting Families of Toeplitz Operators on the Unit Ball}

\author{Matthew Dawson}
\address{Centro de Investigaci\'{o}n en Matem\'{a}ticas,  Jalisco s/n, Col. Valenciana, Guajauato, GTO 36240, M\'{e}xico}
\email{matthew.dawson@cimat.mx}

\author{Gestur \'{O}lafsson}
\address{Department of Mathematics, Louisiana State University, Baton Rouge, LA 70803, U.S.A.}
\email{olafsson@math.lsu.edu}

\author{Ra\'{u}l Quiroga-Barranco}
\address{Centro de Investigaci\'{o}n en Matem\'{a}ticas,  Jalisco s/n, Col. Valenciana, Guajauato, GTO 36240, M\'{e}xico}
\email{quiroga@cimat.mx}

\begin{abstract}
    On the unit ball $\B^n$ we consider the weighted Bergman spaces $\HBlam$ and their Toeplitz operators with bounded symbols. It is known from our previous work that if a closed subgroup $H$ of $\widetilde{\SU(n,1)}$ has a multiplicity-free restriction for the holomorphic discrete series of $\widetilde{\SU(n,1)}$, then the family of Toeplitz operators with $H$-invariant symbols pairwise commute. In this work we consider the case of maximal abelian subgroups of $\widetilde{\SU(n,1)}$ and provide a detailed proof of the pairwise commutativity of the corresponding Toeplitz operators. To achieve this we explicitly develop the restriction principle for each (conjugacy class of) maximal abelian subgroup and obtain the corresponding Segal-Bargmann transform. In particular, we obtain
    a multiplicity one result for the restriction of the
    holomorphic discrete series to all maximal abelian subgroups. We also observe that the Segal-Bargman transform is (up to a unitary transformation) a convolution operator against a function that we write down explicitly for each case. This can be used to obtain the explicit simultaneous diagonalization of Toeplitz operators whose symbols are invariant by one of these maximal abelian subgroups.
\end{abstract}

\maketitle


\section{introduction}
In recent decades, one of the principal aims of research on Toeplitz operators on weighted Bergmann spaces over complex bounded symmetric domains has been the study of commuting families of Toeplitz operators.  In particular, one would like to find large families of Toeplitz operators that generate commutative $C^*$-algebras and, when possible, develop explicit formulas for the spectrum of a Toeplitz operator from one of these families in terms of its symbol.

In each of the known examples of maximal abelian $C^*$-algebras generated by commuting familes of Toeplitz operators, one unifying characteristic is that they consist of all Toeplitz operators with symbols invariant under a subgroup of the group of biholomorphisms of the bounded symmetric domain.  For instance, for the case of the unit ball $\B^n$, it has been shown that for each maximal abelian subgroup $H$ of $\SU(n,1)$, the Toeplitz operators with $H$-invariant symbols generate a maximal abelian $C^*$-algebra of operators (see \cite{QVBall1,QVBall2}).  In addition, explicit integral formulas were found (also in \cite{QVBall1,QVBall2}) for the spectrum of a Toeplitz operator in one of these families in terms of its symbol.  These results were proved using ad-hoc techniques, their proofs were rather long and it was perhaps not clear whether a more unifying principle could be used to calculate the spectra.

On the other hand, in representation theory, the weighted Bergmann spaces on a complex bounded symmetric domain $X=G/K$ are well known as spaces which carry the action of scalar-type holomorphic discrete series representations $\pi_\lambda$ of the hermitian Lie group $G$.  Although much of the research in Toeplitz operator theory has not explicitly made use of this connection to representation theory, recent developments suggest that this is a relationship which should be exploited more.

For instance, in \cite{DOQ} it was shown that if $H\subset G$ is a subgroup such that the restriction $\pi_\lambda|_H$ is multiplicity-free, then the Toeplitz operators over the weighted Bergmann space $\HBlam$ with $H$-invariant symbols form a commuting family.  Many examples of subgroups $H\subset G$ that give rise to multiplicity-free restrictions can be found using the method of \textit{visible actions} developed by Kobayashi (see \cite{K05,K07,K08}; the earlier paper \cite{FE} by Faraut and Thomas was also important in the development of this theory).  For instance, any symmetric subgroup $H \subset G$ gives rise to a multiplicity-free restriction (\cite{K08}).  Furthermore, it was also shown in \cite{DOQ} that if $H\subset G$ is a compact subgroup, then the $H$-invariant symbols give rise to a commutative family of Toeplitz operators if and only if $\pi_\lambda|_H$ is multiplicity-free.  One interesting consequence of these results was that in the case of higher-rank symmetric domains, these families do not correspond to symbols that are invariant under maximal abelian subgroups, as had previously been conjectured, but rather to subgroups of the group of biholomorphisms of the symmetric domain that admit multiplicity-free restrictions of the corresponding scalar-type holomorphic discrete-series representations.

Nevertheless, these results did not include a calculation of the spectra of the corresponding Toeplitz operators.  In \cite{DQ}, such formulas were found for the maximal compact subgroup of any hermitian Lie group using techniques from representation theory and Jordan algebras.  See also \cite{QGrudsky} for similar representation-theoretic calculations for the case of symbols invariant under a maximal torus in $\SU(n,1)$.  However, it was still not clear how to extend this technique to the case of Toeplitz operators with symbols invariant under noncompact subgroups.

In this paper we use the generalized Segal-Bargmann transform to derive a very general formula for the Toeplitz operators acting as convolution operators on certain $L^2$-spaces of functions or sections of a line bundle. In case the symbols are invariant under an abelian group this can be used to find the spectrum of the Toeplitz operators.
Most of the arguments hold for general bounded domains, but here we carry out the details for the unit ball $\B^n$, as one of our main interests in the present work is to show how representation theory and abstract harmonic analysis give a unified way to view and attack these problems. On the way, we simplify the proofs and ideas. This is done by using the restriction  principle (see  \cite{O00,OO96} and  \cite{OZ}) to construct a Segal-Bargmann transform by way of the polar decomposition of a restriction operator.  In order to explicitly calculate the polar decomposition, the square root of a positive-definite operator must be taken.  For the maximal abelian subgroups of $\widetilde{\SU(n,1)}$, this can be done by using elementary Fourier transform methods.

For complex bounded symmetric domains of higher rank, the same techniques should also work for the case of any symmetric subgroup $H$ of any hermitian Lie group $G$ for which at least one orbit of the group $H$ on the complex bounded domain $G/K$ admits an injective restriction operator.  In fact, our Theorem~\ref{thm:mainspectraltheorem} is still valid for such symmetric subgroups. In order to take the square root necessary to explicitliy write the Segal-Bargmann transform, the spherical Fourier transform is expected to play the same role as the classical Fourier transform in this work.  We hope to carry out the details and calculate in the future paper \cite{DOQ17}

We begin with a review of Bergman spaces, the holomorphic discrete series, and the restriction principle in Section~\ref{sec:bergmanspace}.  Next, we briefly review previous results on Toeplitz operators in Section~\ref{sec:toeplitzintro}.  In Section~\ref{sec:maxabelian}, we explicitly calculate the Segal-Bargmann transform for restriction to orbits of maximal abelian subgroups of $\SU(n,1)$.  Finally, in Section~\ref{sec:spectrum}, we use the Segal-Bargmann transform to provide formulas for the spectrum of a Toeplitz operator with symbols that are invariant under such subgroups.

\section{The group $\SU(n,1)$ and the Bergman space}
\label{sec:bergmanspace}
In this section we collect basic facts about the action of the group $G=\SU (n,1)$ on the unit ball
$\B^n=\{z\in \C^n\mid |z|<1\}$. Then we review the action of $G$ and its universal
covering group $\wG$ on the Bergman spaces $\HBlam$ of holomorphic functions on $\B^n$.

\subsection{The action of $\SU(n,1)$ on $\B^n$}
The group $G=\SU (n,1)$ is the subgroup of $\SL (n+1,\C)$ that preserves the sesquilinear form
\[ \ip{J_{n,1}z}{w}
    =  - z_{1}\overline{w}_{1} - \dots - z_{n}\overline{w}_{n}+z_{n+1}\overline{w}_{n+1},
\]
where
\[
    J_{n,1} =
        \begin{pmatrix}
        -I_n & 0 \\
        0 & 1
        \end{pmatrix}.
\]
We write a matrix  $A$ in $M_{n+1}(\C)$ in block form as
\begin{equation}\label{eq1}
  A = A(a,v,w,d) =:
  \begin{pmatrix}
    a & v \\
    w^t & d
  \end{pmatrix} ,
\end{equation}
where $a\in M_n(\C)$, $v,w\in \C^n$ and  $d\in \mathbb{C}$. Then, $A$ is in $G$ if and only if $\det A =1$ and $AJ_{n,1}A^*=J_{n,1}$.  A simple calculation gives
\begin{equation}\label{eq:inverse}
    A^{-1} = J_{n,1}A^*J_{n,1} =
        \begin{pmatrix}
            a^* & -\overline{w} \\
            -\overline{v}^t & \overline{d}
        \end{pmatrix}.
\end{equation}
This relation gives in particular $v = -\bar d^{-1} a \bar w$.

The group $\SU(n,1)$ has some important subgroups which we will discuss later. Here we will only
define the maximal compact subgroup $K$ corresponding to the Cartan involution $\theta$ of $G$ given by $\theta(A)=(A^*)^{-1}$. We also denote as usual $\U (1)\simeq \T$, the one-dimensional torus.
Then
\[
    K=G^\theta=\Spe(\U(n)\times \U (1))=\left\{\left. k_a =
        \begin{pmatrix} a & 0 \\ 0& 1/\det a \end{pmatrix}\, \right|
            a\in \U (n) \right\}
                    \simeq \U (n)
\]
is a maximal compact subgroup of $G$.

The group $G$ acts transitively on $\B^n$ via the fractional linear transformations
\[
  \begin{pmatrix}
    a & v \\
    w^t & d
  \end{pmatrix}\cdot z = \frac{az+v}{w^t z + d}.
\]
We note that this formula makes sense for any element of $\GL(n+1,\C)$ and we will use that without comments in the sequel.

\subsection{The Cayley Transform}
In the following we will be considering some submanifolds of $\B^n$ given as orbits of certain subgroups of
$G$.  Some of the calculations involved will be simpler in an unbounded realization of  $\B^n$.

For the case of arbitrary dimension $n\in \N$, we define the unbounded domain
\[
    D_n  = \{(z',z_n) \in\C^{n-1}\times\C \,\mid
      \im(z_n) - |z'|^2 > 0 \},
\]
and we let
\[
    C = \begin{pmatrix}
        i \\
        & \ddots \\
        & & i  \\
        & & & -i & i \\
        & & & 1 & 1
        \end{pmatrix},
\]
which defines a biholomorphism $\B^n \rightarrow D_n$ given by
\begin{align*}
  \zeta_k &= i\frac{z_k}{1+z_n}, \quad 1 \leq k \leq n-1, \\
  \zeta_n &= i\frac{1-z_n}{1+z_n}.
\end{align*}
If we consider the subgroup $G^C=C\SU (n,1)C^{-1}$, then $G^C$ realizes the group of biholomorphisms of $D_n$.

\subsection{The Bergman spaces on $\B^n$}
We now discuss the Bergman spaces $\HBlam$ on the unit ball $\B^n$ and the corresponding holomorphic discrete series representation $\pi_\lambda$ which acts irreducibly on $\HBlam$.

The unit ball $\B^n$ can be identified with the unit ball in $\R^{2n}$ and thus can be equipped with the measure $\dif v = 2nr^{2n-1}\dif r \dif\sigma_n$ where $\dif\sigma_n$ is the rotation invariant measure on the sphere $S^{2n-1}$ in $\R^{2n}$ normalized by $\sigma_n (S^{2n-1})=1$. Then $\dif v$ is a rotation invariant probability measure on $\B^n$.

For $\lambda>n$ we define the probability measure
\[
    \dif\mu_\lambda(z) = c_\lambda(1-|z|^2)^{\lambda -n -1} \dif v(z).
\]
where $c_\lambda  = \frac{\Gamma(\lambda)}{n!\Gamma(\lambda - n)}$ is chosen so that $\mu_\lambda$ is a probability measure. We obtain the corresponding Hilbert space $\Llam$ whose norm and inner product will be denoted by using $\lambda$ as a subscript. Then  the weighted Bergman space $\HBlam$ with weight $\lambda$ is the (closed) subspace of holomorphic functions that belong to $\Llam$.

We note that there are at least two standard ways to parameterize the Bergman spaces. In complex analysis it is customary to use $\alpha = \lambda-n-1>-1$ with $A_\alpha (\B^n)$ denoting the corresponding Bergman space. Here we use the parametrization from representation theory as this will better fit into our discussion. In particular, in our notation the ``weightless'' Lebesgue measure corresponds to $\lambda = n+1$ and the invariant measure corresponds to $\lambda = 0$.
 
It is well known that $\HBlam$ is a reproducing-kernel Hilbert space; that is, the point evaluation maps $f\mapsto \ev_z(f)=f(z)$ are continuous functionals and thus, for every $w\in\B^n$ there exists $K_w\in \HBlam$
such that
\[
    f(w) =\ip{f}{K_w}_\lambda
\]
for all $f\in\HBlam$. The function $K_\lambda(z,w)= K_w(z)$ is called the reproducing kernel or Bergman kernel of $\HBlam$.  Furthermore, it is also well known that
\[
    K_\lambda(z,w) = (1 - z^t \overline{w})^{-\lambda} = (1 - \langle z, w\rangle)^{-\lambda}
\]
for all $\lambda > n$ and $z, w \in \B^n$. The orthogonal projection $B_\lambda : \Llam \rightarrow \HBlam$, also known as the Bergman projection, is then given by
\[
    P_\lambda(f)(z) = \int_{\B^n} f(w) K_\lambda(z,w) \dif \mu_\lambda(w)
        = \int_{\B^n} \frac{f(w) (1-|w|^2)^{\lambda-n-1} }{(1 - z^t \overline{w})^\lambda}\dif v(w)
\]

We now consider the map $j_\lambda : G\times \B^n \rightarrow \C$ given by
\[
    j_\lambda
        \left(\begin{pmatrix}
        a & v \\
        w^t & d
        \end{pmatrix},z \right)
        = (w^t z + d)^{-\lambda}.
\]
We note that the right hand side is in fact defined on $G\times \B^n$ only when $\lambda \in\N$. Otherwise, we lift the map to the universal covering $\wG \times \B^n$. If $\lambda$ is rational, then $j_\lambda$ is
well defined on a finite covering of $G$. Recall that $\wG$ acts on $\B^n$ by $g\cdot z=p(g) \cdot z$ where
$p :\wG \to G$ is the covering map. The function $j_\lambda$ satisfies the cocycle relation
\[
    j_\lambda (gh,z)=j_\lambda (g,h\cdot z) j_\lambda(h,z).
\]
 
For this setup, the action of $\widetilde{G}$ on $\B^n$ yields an irreducible unitary projective representation of $\widetilde{G}$ on $\HBlam$ given by
\[
    \pi_\lambda(g) f(z) =
    j_\lambda (g^{-1}, z) f(g^{-1}\cdot z).
\]

\subsection{The restriction principle}
\label{subsec:restrictionprinciple}
In this section we recall some facts about the restriction principle specialised to $\B^n$. Note that all of the results in this section hold for any complex bounded symmetric domain $G/K$.  We refer to \cite{O00,OO96,OZ} for more details.

We recall that a submanifold $M \subset \B^n$ is said to be totally real if the inclusion $M \hookrightarrow \B^n$ can be locally modelled by the natural inclusion $\R^n \hookrightarrow \C^n$. For us it is important that the restriction map $f\mapsto f|_{M}$ is injective, where $f : \B^n \rightarrow \C$ is holomorphic.  If this condition holds, we say that $M$ is \textit{restriction injective}.  We will assume this for the rest of this subsection and call $M$ restriction injective. We will show that for each maximal abelian subgroup $H$ of $G$ one can find a point $z_0\in D$ such that this holds for the orbit $H\cdot z_0$. This is also true for any symmetric subgroup of $G$.

Let $H$ be a closed subgroup of $G$ and denote by $\wH$ the inverse image in $\wG$. We assume that the orbit
$M= H \cdot z_0=\wH\cdot z_0 \subset \B^n$ is restriction injective. Note that we can identify $M \cong \wH/\wH_{z_0} \cong H / H_{z_0}$, where the subindex $z_0$ denotes the corresponding isotropy subgroup which is clearly a compact subgroup in the case of $H$. Thus there exists a measure $\dif\mu$ on $M$ which is invariant under both $H$ and $\wH$.

\begin{lemma}\label{lem:linebundle}
    Let us define $\chi_\lambda : \wH_{z_0} \rightarrow \C$ by $\chi_\lambda (h)=j_\lambda (h, z_0)^{-1}$. Then $\chi_\lambda$ is a unitary character that satisfies
    \[
        j_\lambda (hk,z_0) = j_\lambda (h,z_0) \chi_\lambda (k)^{-1},
    \]
    for all $h\in\wH$, $k\in\wH_{z_0}$.
\end{lemma}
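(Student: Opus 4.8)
The plan is to verify the two claims — that $\chi_\lambda$ is a unitary character, and that the stated cocycle-type identity holds — by repeated application of the cocycle relation for $j_\lambda$ together with the fact that $\wH_{z_0}$ fixes $z_0$.

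First I would establish that $\chi_\lambda$ is multiplicative. For $k_1, k_2 \in \wH_{z_0}$, since both fix $z_0$ we have $k_2 \cdot z_0 = z_0$, so the cocycle relation $j_\lambda(k_1 k_2, z_0) = j_\lambda(k_1, k_2 \cdot z_0)\, j_\lambda(k_2, z_0) = j_\lambda(k_1, z_0)\, j_\lambda(k_2, z_0)$ shows that $k \mapsto j_\lambda(k, z_0)$ is a homomorphism on $\wH_{z_0}$; taking inverses gives $\chi_\lambda(k_1 k_2) = \chi_\lambda(k_1)\chi_\lambda(k_2)$. To see that $\chi_\lambda$ takes values in the unit circle, I would use that $\wH_{z_0}$ is compact (noted in the excerpt, as the isotropy of a point under the compact-isotropy orbit $H/H_{z_0}$). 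A continuous homomorphism from a compact group into $\C^\times$ must have image in the maximal compact subgroup $\T$ of $\C^\times$, so $|\chi_\lambda(k)| = 1$.

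Next I would prove the factorization identity. For $h \in \wH$ and $k \in \wH_{z_0}$, apply the cocycle relation to the product $hk$ evaluated at $z_0$: since $k \cdot z_0 = z_0$, we get $j_\lambda(hk, z_0) = j_\lambda(h, k \cdot z_0)\, j_\lambda(k, z_0) = j_\lambda(h, z_0)\, j_\lambda(k, z_0)$. Substituting $j_\lambda(k, z_0) = \chi_\lambda(k)^{-1}$ from the definition yields exactly $j_\lambda(hk, z_0) = j_\lambda(h, z_0)\, \chi_\lambda(k)^{-1}$, as claimed.

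I do not expect a genuine obstacle here; the statement is essentially a direct unwinding of the cocycle relation. The only point requiring slight care is the unitarity claim, where one must invoke compactness of $\wH_{z_0}$ rather than merely the algebraic homomorphism property — a homomorphism into $\C^\times$ from a noncompact group need not be unitary. Since the excerpt already records that $\wH_{z_0}$ is compact (being the isotropy group in the identification $M \cong \wH/\wH_{z_0}$ with compact isotropy), this is immediate. One should also confirm that $\chi_\lambda$ is continuous so that the compact-image argument applies, which follows from the continuity of $j_\lambda$ in its group argument.
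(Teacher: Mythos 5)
Your treatment of the multiplicativity of $\chi_\lambda$ and of the identity $j_\lambda(hk,z_0)=j_\lambda(h,z_0)\chi_\lambda(k)^{-1}$ is exactly the paper's argument: both are single applications of the cocycle relation together with $k\cdot z_0=z_0$. That part is fine.

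The gap is in your unitarity argument. You invoke compactness of $\wH_{z_0}$, but the paper only asserts that $H_{z_0}$ (the isotropy group in $G$) is compact; the group $\wH_{z_0}$ appearing in the lemma is the isotropy group in the universal cover $\wG$, and it is in general a noncompact (infinite discrete) extension of $H_{z_0}$. For instance, in the quasi-elliptic case the paper computes $\widetilde{E(n)}_{z_0}=\{(1,\dots,1,k/(n+1))\mid k\in\Z\}\cong\Z$, which is not compact, and a character of $\Z$ into $\C^\times$ need not be unitary (e.g.\ $k\mapsto 2^k$). So the "continuous homomorphism from a compact group lands in $\T$" argument does not apply. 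The correct route is the transformation law of the Bergman kernel, $K_\lambda(g\cdot z,g\cdot w)\,j_\lambda(g,z)\overline{j_\lambda(g,w)}=K_\lambda(z,w)$ (equivalently, $1-|g\cdot z|^2=(1-|z|^2)\,|j_1(g,z)|^{2}$): setting $z=w=z_0$ and $g=k\in\wH_{z_0}$ gives $|j_\lambda(k,z_0)|^2=1$, hence $|\chi_\lambda(k)|=1$. (The paper's own proof in fact omits the unitarity verification entirely, so you were right to feel it needed an argument --- it just needs this one rather than a compactness argument.)
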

\begin{proof}
    Let $h\in\wH$ and $k,k' \in \wH_{z_0}$. Then
    \[
        \chi_\lambda (kk')=j_\lambda (kk',z_0)^{-1}
            =\left(j_\lambda (k,k'\cdot z_0)j_\lambda(k',z_0)\right)^{-1}
            =\chi_\lambda(k)\chi_\lambda(k')
    \]
    because $k' \cdot z_0 = z_0$. Similarly, we see that
    \[
        j_{\lambda}(hk,z_0)
            =j_\lambda(h,k\cdot z_0) j_\lambda(k,z_0)
            =j_\lambda(h,z_0)\chi_{\lambda }(k)^{-1}.
    \]
\end{proof}

We now consider the induced representation $\rho_\lambda := \ind_{\wH_{z_0}}^{\tilde H} \chi_\lambda$. The
Hilbert space for $\rho_\lambda$ is the space $L^2_{\chi_\lambda}(M, \mu)$ of measurable functions $f: \tilde H \rightarrow \C$ that satisfy
\begin{equation}\label{dovar}
    f(hh_1)=\chi_{\lambda} (h_1)^{-1}f (h)=j_\lambda (h_1,z_0)f(h),
\end{equation}
for all $h \in \wH, h_1 \in \wH_{z_0}$, as well as
\[
    \int_{\tilde H/\tilde H_{z_0}}|f(h)|^2\, \dif\mu (h)=\int_{M}|f(m)|^2\, \dif\mu (m)<\infty,
\]
where we have used that $|f|$ is right $\wH_{z_0}$-invariant and, by abuse of notation, we have identified $|f|$ with a function on $M$. We will use similar abuse of notation for the inner product
\[
    \ip{f}{g}_{\chi_\lambda}=\int_{M} f(m)\overline{g(m)}\, \dif\mu (m)
\]
where $f,g\in L^2_{\chi_\lambda}(M, \mu)$. On the other hand, the space of continuous functions satisfying the right covariance (\ref{dovar}) will be denoted by $C_{\chi_\lambda}(M)$. We recall that the elements in $L^2_{\chi_\lambda}(M,\mu)$ can be viewed as $L^2$-section of a line bundle over $M$.

Let us now define $\Dlambda: \widetilde{H}\rightarrow \C$ by $ \Dlambda(h) = j_\lambda (h,z_0)$, so that we have
\begin{equation}\label{eq:CovD}
    \Dlambda (hh_1)=\chi_\lambda (h_1)^{-1}\Dlambda (h)=\overline{\chi_\lambda (h_1)}\Dlambda (h),
\end{equation}
for $h\in \wH$ and $h_1\in \wH_{z_0}$. We assume that $|\Dlambda | \in L^2(M,\mu)$, which implies that $\Dlambda \in L^2_{\chi_\lambda}(M,\mu)$. Next we define $R : \HBlam \rightarrow C_{\chi_\lambda}(M)$ by
\[
    R(f)(h)=\Dlambda(h) f|_{M}(h\cdot z_0),
\]
for all $h \in \wH$.

According to \cite[Lem. 2.10]{CGO16} the smooth vectors in $\HBlam$ are bounded and hence are mapped into
$L^2_{\chi_\lambda}(M,\mu)$. In particular this holds for all holomorphic polynomials.
Furthermore, we compute for $f \in \HBlam$ and $h, h_1 \in \wH$
\begin{align*}
    R(\pi_\lambda(h_1)f)(h)&=j_\lambda (h,z_0) (\pi_\lambda(h_1) f)(h\cdot z_0)\\
        &=j_\lambda(h,z_0)j_\lambda (h_1^{-1},h\cdot z_0)f(h_1^{-1}h\cdot z_0)\\
        &=j_\lambda(h_1^{-1}h,z_0)f(h_1^{-1}h\cdot z_0)\\
        &=(\rho_\lambda (h_1)Rf)(h)\, .
\end{align*}
Finally, as the point evaluations maps are continuous, it follows that $R$ is closed. Denote the closure of $R(\HBlam)$ in $L^2_{\chi_\lambda}(M,\mu)$ by $\Blam$. Since $R$ is closed it follows that $R^*: \Blam \to \HBlam$ is well defined and we have
\begin{align*}
    R^*f(z) &= \ip{R^* f}{K_{z}}_\lambda \\
        &= \ip{f}{RK_z}_{L^2_{\chi_\lambda}(M,\mu)} \\
        &= \int_{\wH/\wH_{z_0}} f(h)\overline{\Dlambda (h)}K (z,h\cdot z_0)\, \dif\mu (h),
\end{align*}
for all $f \in \Blam$ and $z \in \B^n$.

Let us define
\[
    R_\lambda (h,k):= \Dlambda(h) \overline{\Dlambda(k)} K_\lambda (h\cdot z_0,k\cdot z_0),
\]
where $h, k \in \wH$ and note that for $h_1,k_1\in\wH_{z_0}$ we have

\[
    R_\lambda (hh_1,kk_1)=\overline{\chi_\lambda(h_1)} R_\lambda (h,k) \chi_\lambda (k_1).
\]
In particular, for every $f \in \Blam$ the assignment $k \mapsto f(k)R_\lambda(h,k)$ ($h,k \in \wH$) defines a function on $M$ and we can verify that the following holds.
\begin{lemma}
\label{lem:kernelR}
    Let $f \in \Blam$ be given. Then
    \[
        RR^* f(h)=\int_{M} f(k) R_\lambda (h,k)\, \dif\mu (k),
    \]
    for all $h\in\wH$.
\end{lemma}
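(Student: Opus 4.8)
The plan is to obtain the formula by composing $R$ and $R^*$ directly, with no interchange of integrals needed. The key observation is that, by the very definition of $R$, applying it to any holomorphic function $g$ in its domain simply multiplies by $\Dlambda$ and restricts to the orbit: since $h\cdot z_0 \in M$, we have $R(g)(h) = \Dlambda(h)\, g(h\cdot z_0)$. Because $R^*f$ is again an honest holomorphic function in $\HBlam$, I would take $g = R^*f$ and write
\[
    RR^* f(h) = \Dlambda(h)\,(R^*f)(h\cdot z_0),
\]
for $h \in \wH$.

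Next I would insert the integral expression for $R^*$ already derived above, evaluated at the point $z = h\cdot z_0 \in \B^n$. This is legitimate because $R^*f$ lies in the reproducing-kernel Hilbert space $\HBlam$, so the formula
\[
    (R^*f)(z) = \int_{\wH/\wH_{z_0}} f(k)\,\overline{\Dlambda(k)}\,K_\lambda(z, k\cdot z_0)\, \dif\mu(k)
\]
holds pointwise in $z$; specializing $z = h\cdot z_0$ and bringing the constant (in $k$) factor $\Dlambda(h)$ inside the integral gives
\[
    RR^* f(h) = \int_{M} f(k)\,\Dlambda(h)\,\overline{\Dlambda(k)}\,K_\lambda(h\cdot z_0, k\cdot z_0)\, \dif\mu(k).
\]
Recognizing the integrand as $f(k)\,R_\lambda(h,k)$ via the definition $R_\lambda(h,k) = \Dlambda(h)\overline{\Dlambda(k)}K_\lambda(h\cdot z_0, k\cdot z_0)$ then yields the claim. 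That the integral over $M \cong \wH/\wH_{z_0}$ is well defined is exactly the remark made just before the statement: combining the covariance $f(kk_1) = \chi_\lambda(k_1)^{-1}f(k)$ from \eqref{dovar} with $R_\lambda(h,kk_1) = R_\lambda(h,k)\chi_\lambda(k_1)$ shows that $k \mapsto f(k)R_\lambda(h,k)$ is right $\wH_{z_0}$-invariant and hence descends to $M$.

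The only genuine subtlety, and the step I would treat most carefully, is the justification that the composition $RR^*$ really is computed by applying the defining formula for $R$ to $R^*f$, i.e.\ that $R^*f$ lies in the domain of the (closed, possibly unbounded) operator $R$, and that the reproducing-kernel integral for $R^*f$ converges and may be evaluated at $h\cdot z_0$. Both points rest on the fact, established above, that $R^*f$ is an element of $\HBlam$, together with the boundedness of smooth vectors from \cite[Lem. 2.10]{CGO16}; once these are in place the remaining manipulation is purely formal. This integral kernel representation of the positive self-adjoint operator $RR^*$ is precisely what will later be diagonalized in order to extract its square root and build the Segal-Bargmann transform.
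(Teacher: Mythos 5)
Your proof is correct and is exactly the verification the paper has in mind: the paper states the lemma as an immediate consequence of the displayed integral formula for $R^*f(z)$ and the definition of $R_\lambda$, leaving the computation to the reader. Your write-up of $RR^*f(h)=\Dlambda(h)(R^*f)(h\cdot z_0)$ followed by specialization to $z=h\cdot z_0$, together with the check that $k\mapsto f(k)R_\lambda(h,k)$ descends to $M$, supplies precisely the omitted details.
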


Denote by $\sqrt{RR^*}$ the square root of the positive operator $RR^*$. Then there exists a unitary
isomorphism $U_\lambda: \Blam \rightarrow \HBlam$ such that
\[
    R^*=U_\lambda \sqrt{RR^*}.
\]
The map $U_\lambda$ is called the Segal-Bargmann transform. As in \cite{O00,OO96} we now have the following result.

\begin{theorem}[The Segal-Bargman transform]
The Segal-Bargmann transform $U_\lambda  : (\Blam,\rho_\lambda) \to (\HBlam,\pi_\lambda|_{\wH})$ is a unitary $\wH$-isomorphism.
\end{theorem}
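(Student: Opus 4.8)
The plan is to read the statement off the polar decomposition $R^*=U_\lambda\sqrt{RR^*}$, combined with two injectivity facts, and then to transport the intertwining relation already established for $R$ through the square root. Since $R$ is closed and densely defined (its domain contains all holomorphic polynomials, which are dense in $\HBlam$), its adjoint $R^*$ is again closed and densely defined with $R^{**}=R$, so the polar decomposition applies: $R^*=U_\lambda\lvert R^*\rvert$ with $\lvert R^*\rvert=\sqrt{(R^*)^*R^*}=\sqrt{RR^*}$, and $U_\lambda$ is a partial isometry whose initial space is $(\ker R^*)^\perp$ and whose final space is $\overline{\mathrm{ran}\,R^*}$. The entire task then reduces to showing that each of these spaces is the full Hilbert space and that $U_\lambda$ intertwines $\rho_\lambda$ with $\pi_\lambda|_{\wH}$.

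First I would check that $U_\lambda$ is isometric on all of $\Blam$. Because $\ker R^*=(\overline{\mathrm{ran}\,R})^\perp$ and $\Blam$ is by definition the closure of $\mathrm{ran}\,R$, the restriction of $R^*$ to $\Blam$ has trivial kernel; hence the initial space $(\ker R^*)^\perp\cap\Blam$ equals $\Blam$, and $U_\lambda$ is isometric there. Equivalently, $RR^*$ has trivial kernel on $\Blam$, so $\sqrt{RR^*}$ is injective with dense range in $\Blam$. This step uses nothing beyond the construction of $\Blam$.

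Next I would establish surjectivity, which is where restriction injectivity enters. The final space $\overline{\mathrm{ran}\,R^*}$ equals $(\ker R)^\perp$, so onto-ness is equivalent to injectivity of $R$. If $Rf=0$ for some $f\in\HBlam$, then $\Dlambda(h)\,f(h\cdot z_0)=0$ for every $h\in\wH$; since $\Dlambda(h)=j_\lambda(h,z_0)$ is a nowhere-vanishing cocycle value, this forces $f|_M=0$, and restriction injectivity of the orbit $M=\wH\cdot z_0$ yields $f=0$. Thus $\ker R=0$, $\overline{\mathrm{ran}\,R^*}=\HBlam$, and $U_\lambda$ is a unitary isomorphism $\Blam\to\HBlam$. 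The genuine conceptual input is the restriction injectivity of $M$; here it is a standing hypothesis, and only its consequence, injectivity of $R$, is invoked.

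Finally, for equivariance I would push the relation $R\pi_\lambda(h_1)=\rho_\lambda(h_1)R$ through adjoints and the square root. Taking adjoints and using unitarity of $\pi_\lambda$ and $\rho_\lambda$ gives $R^*\rho_\lambda(h)=\pi_\lambda(h)R^*$ for all $h\in\wH$; composing the two relations yields $RR^*\rho_\lambda(h)=\rho_\lambda(h)RR^*$, so each unitary $\rho_\lambda(h)$ commutes with the self-adjoint operator $RR^*$. By the spectral theorem a bounded operator commuting with $RR^*$ commutes with every Borel function of it, whence $\sqrt{RR^*}\,\rho_\lambda(h)=\rho_\lambda(h)\sqrt{RR^*}$. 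Substituting the polar decomposition into $R^*\rho_\lambda(h)=\pi_\lambda(h)R^*$ produces $U_\lambda\rho_\lambda(h)\sqrt{RR^*}=\pi_\lambda(h)U_\lambda\sqrt{RR^*}$, and since $\sqrt{RR^*}$ has dense range in $\Blam$ we obtain $U_\lambda\rho_\lambda(h)=\pi_\lambda(h)U_\lambda$ on a dense subspace, hence everywhere by continuity. This exhibits $U_\lambda$ as the desired unitary $\wH$-isomorphism. I expect the main technical care to lie not in any single inequality but in the operator-theoretic bookkeeping for possibly unbounded closed operators, especially the justification via the spectral theorem that the unitaries $\rho_\lambda(h)$ commute with $\sqrt{RR^*}$.
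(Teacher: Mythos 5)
Your argument is correct and is essentially the paper's own route: the paper constructs $R$ as a closed, densely defined, $\wH$-intertwining operator, defines $U_\lambda$ by the polar decomposition $R^*=U_\lambda\sqrt{RR^*}$, and leaves exactly the details you supply (injectivity of $R$ from restriction injectivity, triviality of $\ker R^*$ on $\Blam$, and transporting the intertwining relation through $\sqrt{RR^*}$ via the spectral theorem) to the cited references \cite{O00,OO96}. No gaps; your closing caveat about the unbounded-operator bookkeeping is the right place to be careful, and the standard polar-decomposition machinery for closed densely defined operators handles it.
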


\section{Toeplitz operators}
\label{sec:toeplitzintro}

In this section we recall basic facts about Toeplitz operators on the unit ball. For further details we refer to \cite{QVBall1,QVBall2}.

\subsection{Toeplitz operators}
For $\varphi \in L^\infty(\B^n)$ we define the multiplier operator $M_\varphi$ on the space $L^2(\B^n,\mu_\lambda)$ in the usual way
\[
    M_\varphi f(z) = \varphi(z)f(z).
\]
Of course, $M_\varphi$ will typically not define an operator from $\HBlam$ to $\HBlam$. We therefore define the Toeplitz operator $T_\varphi^{(\lambda)}$ with symbol $\varphi$ corresponding to the weight $\lambda > n$ to be the bounded operator
\[
    T^{(\lambda)}_ \varphi: \cH_\lambda \rightarrow \cH_\lambda, \quad
         f  \mapsto P_\lambda M_\varphi f.
\]
In particular, we have
\[
    (T^{(\lambda)}_\varphi f) (z) = \int_{\B^n} \varphi(w ) f(w) K_\lambda(z,w) \dif\mu_\lambda(w),
\]
for $f \in \HBlam$ and $z \in \B^n$. To simplify our notation we mostly write $T_\varphi$ for $T^{(\lambda )}_\varphi$. The operator $T_\varphi$ is bounded and $\|T_\varphi\|\le \|\varphi\|_\infty$. In particular, the assignment $\varphi \mapsto T_\varphi$ defines a bounded operator. Furthermore, it is well known that this assignment is injective.

\subsection{Commutative families of Toeplitz operators and representation theory}
In this section we briefly review some of the results in \cite{DOQ} which connect commutativity of Toeplitz operators with representation theory, in particular restriction of the discrete series representation $\pi_\lambda$ to subgroups of $G$ or $\wG$.

For $\varphi \in L^\infty(\B^n)$ and $g\in \wG$ define $\pi(g)\varphi(z)=\varphi (g^{-1}\cdot z)$. Then \cite[Lem. 3.2]{DOQ} shows that
\[
    \pi_\lambda (g) \circ T^{(\lambda)}_\varphi = T^{(\lambda)}_{\pi(g)\varphi}\circ \pi_\lambda(g)
\]
for all $g \in \wG$.
This shows in particular \cite[Cor. 3.3]{DOQ} that, if $\wH$ is a closed subgroup of $\wG$, then $\varphi$ is $\wH$-invariant if and only if $T_\varphi^{(\lambda)}$ is an intertwining operator for $\pi_\lambda|_{\wH}$.

If $\wH\subset \wG$ is a reasonably well behaved (e.g.~a type I subgroup), then the representation $\pi_\lambda|_{\wH}$ can be decomposed into irreducible representations
\[
    \pi_\lambda|_{\wH} \simeq_{\wH}
    \int_{\widehat{\wH}}^{\oplus} m_\lambda (\rho )\rho \, \dif\nu_\lambda(\rho)
\]
where $\widehat{\wH}$ is the set of equivalence classes of irreducible unitary representations
of $\wH$ and  $m_\lambda : \widehat{\wH}\to \N\cup \{\infty\}$ is a multiplicity function. We say that the representation $\pi_\lambda|_{\wH}$ is multiplicity free if $\pi_\lambda (\rho) \in \{0,1\}$ for all $\rho \in \widehat{\wH}$. For type I groups this is equivalent to the algebra of intertwining operators for $\pi_\lambda|_{\wH}$ being commutative. In that case, if $T :\HBlam \rightarrow \HBlam$ is an intertwining operator, then $T$ decomposes as
\begin{equation}\label{symbol}
    T = \int_{\widehat{\wH}}^{\oplus} \eta_T (\rho)\id_{\cH_\rho}\, \dif\nu_\lambda (\rho),
\end{equation}
where $\eta_T : \widehat{\wH} \rightarrow \C$. Furthermore, every operator of the form (\ref{symbol}) defines a $\wH$-intertwining operator. The set $(\eta_T (\rho))_{\rho}$ is the spectrum of $T$. According to \cite[Thm. 4.2, Thm. 6.4]{DOQ} we have the following result. In what follows, we will denote by $T^{(\lambda)}(\mathcal{A})$ the $C^*$-algebra generated by the Toeplitz operators on $\HBlam$ with symbols in $\mathcal{A}$.

\begin{theorem}[\cite{DOQ}]
    Let $\wH$ be a closed subgroup of $\wG$ and let us denote by $L^{\infty}(\B^n)^{\wH}$ the subspace of $L^\infty(\B^n)$ that consists of the $\wH$-invariant bounded symbols on $\B^n$. Then the following holds:
    \begin{enumerate}
        \item If for some $\lambda > n$ the algebra of bounded $\wH$-intertwining operators for $\pi_\lambda|_{\wH}$ is commutative, then $T^{(\lambda)}(L^{\infty}(\B^n)^{\wH})$ is a commutative $C^*$-algebra. In particular, the result holds if $\wH$ is a type I group, in the sense of von Neumann algebras, and the restriction $\pi_\lambda|_{\wH}$ is multiplicity-free.
        \item If $\wH$ is compact, then $T^{(\lambda)} (L^{\infty}(\B^n)^{\wH})$ is commutative if and only if $\pi_\lambda|_{\wH}$ is multiplicity free.
    \end{enumerate}
\end{theorem}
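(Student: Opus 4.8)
The plan is to reduce both parts to the structure of the commutant of the restricted representation. By the intertwining relation and its consequence recorded just above (\cite[Cor.~3.3]{DOQ}), a bounded symbol $\varphi$ is $\wH$-invariant if and only if $T_\varphi^{(\lambda)}$ lies in the commutant $\pi_\lambda(\wH)'$; since this commutant is a von Neumann algebra and $T_{\overline{\varphi}}^{(\lambda)} = (T_\varphi^{(\lambda)})^*$, the $C^*$-algebra $T^{(\lambda)}(L^\infty(\B^n)^{\wH})$ is a $C^*$-subalgebra of $\pi_\lambda(\wH)'$. Part (1) is then immediate: if $\pi_\lambda(\wH)'$ is commutative, so is every subalgebra. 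For the ``in particular'' clause I would invoke the standard direct-integral theory for the decomposition displayed above: when $\wH$ is type I the commutant is isomorphic to $\int_{\widehat{\wH}}^{\oplus} M_{m_\lambda(\rho)}(\C)\,\dif\nu_\lambda(\rho)$, which is abelian exactly when $m_\lambda(\rho)\le 1$ for $\nu_\lambda$-almost every $\rho$, i.e.\ precisely when $\pi_\lambda|_{\wH}$ is multiplicity-free.

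For part (2), the backward implication is a special case of part (1), since compact groups are type I. The whole content is the forward implication, which I would obtain from the sharper assertion that \emph{for compact $\wH$ the Toeplitz operators with $\wH$-invariant symbols generate $\pi_\lambda(\wH)'$ as a von Neumann algebra}. Granting this, commutativity of $T^{(\lambda)}(L^\infty(\B^n)^{\wH})$ forces its weak closure to be commutative --- the weak-operator closure of a commutative self-adjoint family is commutative, by separate weak continuity of multiplication --- so $\pi_\lambda(\wH)'$ is abelian, and by the type I dichotomy used in part (1) the restriction $\pi_\lambda|_{\wH}$ is multiplicity-free.

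Two ingredients go into the generation statement, and compactness is essential in both. The first is averaging: since $\wH$ is compact, $\mathcal{E}(A)=\int_{\wH}\pi_\lambda(h)A\pi_\lambda(h)^{-1}\,\dif h$ (normalized Haar measure) defines a normal conditional expectation of $B(\HBlam)$ onto $\pi_\lambda(\wH)'$, and the intertwining relation gives $\mathcal{E}(T_\varphi^{(\lambda)}) = T_{\varphi^\natural}^{(\lambda)}$, where $\varphi^\natural = \int_{\wH}\pi(h)\varphi\,\dif h$ is $\wH$-invariant. The second is density: the Toeplitz operators with arbitrary bounded symbols are dense in $B(\HBlam)$ in the $\sigma$-weak topology, whose continuous functionals are the trace-class operators. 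I would prove this through the Berezin transform, using the reproducing property to compute, for a trace-class operator $S$,
\[
    \tr\!\left(S\,T_\varphi^{(\lambda)}\right) = \int_{\B^n}\varphi(w)\,\ip{SK_w}{K_w}_\lambda\,\dif\mu_\lambda(w),
\]
so that $S$ annihilates every Toeplitz operator if and only if the Berezin symbol $w\mapsto \ip{SK_w}{K_w}_\lambda$ vanishes identically; polarizing in the holomorphic and antiholomorphic variables and using that the $K_w$ span a dense subspace of $\HBlam$ forces $S=0$, whence the linear span of the Toeplitz operators is $\sigma$-weakly dense in $B(\HBlam)$. Combining the two, normality of $\mathcal{E}$ carries this dense span onto a $\sigma$-weakly dense subset of $\mathcal{E}(B(\HBlam)) = \pi_\lambda(\wH)'$ consisting of Toeplitz operators with $\wH$-invariant symbols (here $\varphi^\natural$ ranges over all of $L^\infty(\B^n)^{\wH}$), which is exactly the generation claim.

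I expect the generation step to be the main obstacle, and within it the $\sigma$-weak density of Toeplitz operators via injectivity of the Berezin transform, together with the normality of the averaging expectation, are the only places where genuine analysis rather than soft representation theory enters. It is also precisely here that compactness cannot be dropped: for noncompact $\wH$ the Haar averaging producing $\mathcal{E}$ is unavailable, the invariant symbols need no longer generate the full commutant, and this is why the converse in (2) can fail while only the one-directional implication of (1) persists.
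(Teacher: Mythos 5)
The paper does not prove this theorem at all --- it is quoted verbatim from \cite{DOQ} (Thm.~4.2 and Thm.~6.4 there), so there is no internal proof to compare against. Your argument is correct and is essentially the strategy of that cited source: part (1) via the observation that $\varphi\mapsto T_\varphi^{(\lambda)}$ carries invariant symbols into the commutant $\pi_\lambda(\wH)'$ together with $T_{\overline\varphi}^{(\lambda)}=(T_\varphi^{(\lambda)})^*$, and part (2) via the normal conditional expectation $\mathcal{E}(A)=\int_{\wH}\pi_\lambda(h)A\pi_\lambda(h)^{-1}\,\dif h$ combined with the $\sigma$-weak density of Toeplitz operators in $B(\HBlam)$; that density step is exactly Engli\v{s}'s theorem, which appears in this paper's bibliography as \cite{E92} and is the one genuinely nontrivial analytic input, just as you identify.
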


Assume now that $\wH$ is so that $\pi_\lambda|_{\wH}$ is multiplicity free and that $M=\wH\cdot z_0$ is restriction injective. Let $U :\Blam \rightarrow \HBlam$ be the corresponding Segal-Bargman transform. Then
$U^* : \HBlam \rightarrow \Blam$ defines a unitary intertwining operator and the above discussion implies that
\[
    (\pi_\lambda|_{\wH} ,\HBlam)\simeq (\rho_\lambda, \Blam)
            \simeq \int^\oplus_{\widehat{\wH}} (\rho ,\cH_\rho )\, \dif\nu (\rho )
\]
and the expression
\begin{equation}\label{eq:diag}
    U^* T_\varphi^{(\lambda)} U= \int^\oplus_{\widehat{\wH}} \eta_{\varphi,\lambda}(\rho) \id_{\cH_\rho}\, \dif\mu (\rho),
\end{equation}
gives the diagonalization of the Toeplitz operator $T_\varphi$ and the set $(\eta_{\varphi,\lambda}(\rho))_\rho$ is the spectrum of $T_\varphi^{(\lambda)}$.  We aim to make this diagonalization more explicit.

\section{Restriction of the discrete series to maximal abelian subgroups of $\SU (n,1)$}
\label{sec:maxabelian}
In this section we apply the restriction principle to the maximal abelian subgroups of $\SU(n,1)$. It is not difficult to check the real dimension of the nondegenerate orbits of maximal abelian subgroups of $\SU(n,1)$ is the same as the complex dimension of the domain $\B^n$.   In fact, one can show (see, for instance, \cite{QVBall2}) that these nondegenerate orbits are all lagrangian submanifolds and therefore restriction injective.  Thus, the abstract theory of the restriction principle discussed in Subsection~\ref{subsec:restrictionprinciple} can be applied as long as one can show, for all $\lambda > n$, that the function $D_\lambda$ belongs to $L^2_{\chi_\lambda}(H\cdot z_0)$ for some nondegenerate orbit $H\cdot z_0$ of a maximal abelian subgroup $H\leq\SU(n,1)$.

Let $\Blam$ be the image of the restriction operator $R$ (defined as in Subsection~\ref{subsec:restrictionprinciple}).  Also, we let $\widetilde{H}$ be the subgroup of $\widetilde{\SU(n,1)}$ which covers $H$.  Of course, $H\cdot z_0 = \widetilde{H}\cdot z_0$. Since $\widetilde{H}$ is an abelian group, its regular representation is multiplicity-free, as is any representation induced from a character of a subgroup of $\widetilde{H}$.

In fact, in this section, for each conjugacy class of maximal abelian subgroup $H$ of $\SU(n,1)$ and basepoint $z_0$ of a nondegenerate orbit $H\cdot z_0$, we will write $RR^*$ as a densely-defined convolution operator on $L^2_{\chi_\lambda}(H\cdot z_0)$ in the following way.  First, for each maximal abelian subgroup $H$, we will construct a homomorphic embedding $\widetilde H\hookrightarrow H/H_{z_0} \times \R$.  This will allow us to extend the character $\chi_\lambda$ from $H_{z_0}$ to $\widetilde{H}$ by defining
\[
    \chi_\lambda((h,x)):= e^{2\pi i \lambda x}
\]
for all $h\in H/H_{z_0}$ and $x\in \R$. For each line-bundle section $f$ in $L^2_{\chi_\lambda}(H\cdot z_0)$ (interpreted as a $\chi_\lambda$-equivarient function on $\widetilde{H}$), one can see that $\widetilde{f}:=f\chi_\lambda$ can in fact be factored to a \textit{function} in $L^2(H\cdot z_0)\cong L^2(H/H_{z_0})$, since the line bundle was induced from a central character $\chi_\lambda$.  Then for each maximal abelian subgroup $H$ we will find a function $\phi_H\in L^1(H/H_{z_0})$ such that for $f$ in the domain of $R$,
\[
    RR^* f = \chi_{-\lambda} \cdot(\widetilde{f} * \phi_H),
\]
In fact, we will identify the function $\phi_H$ and explicitly calculate its $H/H_{z_0}$-Fourier transform $\widehat{\phi_H}$.  Since $\phi_H\in L^1(H/H_{z_0})$, it will follow in each case that the operator $RR^*$ is bounded, and hence that $R$ and $R^*$ are bounded as well.

Hence, the closure of the range of $RR^*$ (and thus the closure of the range of $\sqrt{RR^*}$) will be:
\[
  \Blam = \{f\in L^2_{\chi_\lambda}(H\cdot z_0) \mid (\forall  \alpha\in \widehat{H/H_{z_0}}\text{ such that } \widehat{\phi_H}(\alpha)=0)
  \, \,  \cF_{H/H_{z_0}}\widetilde f(\alpha) = 0 \},
\] where $\cF_{H/H_{z_0}} = \widehat{\cdot}$ represents the Fourier transform for the abelian group $H/H_{z_0}$.  Furthermore, the operator $\sqrt{RR^*}$ can then be written as:
\[
\sqrt{RR^*} f = \chi_{-\lambda}\cdot(\widetilde{f} * \omega_H)
\]
for all $f\in L^2_{\xi_\lambda}(H\cdot z_0)$, where $\omega_H$ is defined by $\widehat{\omega_H}(\alpha) = \sqrt{\widehat{\phi_H}(\alpha)}$ for all $\alpha\in\widehat{H}$. Note that $\omega_H$ is guaranteed to exist at least as a tempered distribution on $H/H_{z_0}$ by the boundedness of $\widehat{\phi_H}$.

We will mostly follow the notation in \cite{QVBall2} in the rest of this section.  Also, we will define the Fourier transform on the torus $\T$ by
\[
 \widehat{f}(n)=\int_\T f(z) \overline{z^n} dz = \int_0^1 f(e^{2\pi i x})e^{-2\pi i n x} dx
\]
for all $f\in L^1(\T)$ and $n\in\Z$, where we have normalized the Haar measure on $\T$ to have weight one.  The Fourier transform on $\R$ will be given by the integral
\[\widehat{f}(\xi) = \frac{1}{\sqrt{2\pi}}\int_\R f(x) e^{-ix} dx, \]
where $f\in L^1(\R)$ and $\xi\in\R$.
\subsection{Quasi-Elliptic}
The Quasi-Elliptic abelian subgroup corresponds to the maximal compact torus in $G$:
\[
E(n) = \left\{\left. k_{t,a}= \begin{pmatrix}
                  a t_1 & && & \\
                     & &\ddots & &\\
                     &   &   & a t_n & \\
                     &      &   &  &  a
                            \end{pmatrix}
                      \right|
                     \begin{array}{l}
                      a,t_1,\ldots,t_n\in\T  \\
                      a^{n+1} t_1\cdots t_n = 1
                     \end{array}
                     \right
                               \}.
\]

The subgroup of the simply-connected group $\widetilde{\SU(n,1)}$ which corresponds to $E(n)$ will be denoted by $\widetilde{E(n)}$, which we will identify with the group
\[\widetilde{E(n)} = \{ (t_1,\ldots, t_n,x) \in\T^n \times \R \mid  e^{2\pi i (n+1)x}t_1 \cdots t_n = 1\}\]
with the product
\[
(t,x)\cdot (s,y) = (ts, x+y)
\]
for all $(t,x), (s,y)\in \widetilde{E(n)}$, where $t,s\in\T^n$ and $x,y\in \R$.
The projection map is given by:
\begin{align*}
\widetilde{E(n)} & \rightarrow   E(n)\\
   (t,x) & \mapsto  k_{t,e^{2\pi i x}}.
\end{align*}

Let $z_0=\left(\frac{1}{\sqrt{2n}},\ldots,\frac{1}{\sqrt{2n}}\right)\in\B^n$ and note that
\[
    k_{t,a} \cdot z_0=\frac{1}{\sqrt{2n}}\, (t_1,\ldots ,t_n)
\]
Hence, the action on the $z_0$-orbit is locally free with stabilizers at $z_0$ given by
\begin{align*}
    E(n)_{z_0} &= \{ k_{(t,a)}\mid a^{n+1}=1\} \simeq \Z_n,  \\
    \widetilde{E(n)}_{z_0} &= \left\{\left. (1,\ldots, 1, \frac{k}{n+1}) \, \right|\,   k \in \Z\right\} \cong\Z.
\end{align*}
Finally, we make the identification
\[
   E(n)/E(n)_{z_0} =\widetilde{E(n)}/\widetilde{E(n)}_{z_0}\cong \T^n,
\]
where the projection map is given by:
\begin{align*}
\widetilde{E(n)} & \rightarrow   E(n)/E(n)_{z_0}\\
   (t,x) & \mapsto t.
\end{align*}

We can now explicitly write the restriction operator $R$ for the orbit $E(n)\cdot z_0$.  In fact, for each $q=(t,x) \in \widetilde{E(n)}$, we have that
\[
  \Dlambda(q) = j_\lambda(q,z_0) =
       (e^{2\pi i x})^{-\lambda} = e^{-2\pi i \lambda x} .
\]
Furthermore, $
   |\Dlambda(q)|^2 = 1$ for all $q\in\widetilde{E(n)}$.
    It follows that
$\Dlambda\in L^2_{\chi_\lambda}(E(n)\cdot z_0)$ for all $\lambda\in\R$ and, in particular, for $\lambda > n$.
Thus, the restriction operator is given by
\[
    Rf (t,x)= e^{-2\pi i \lambda x} f\left(\frac{t_1}{\sqrt{2n}}, \ldots, \frac{t_n}{\sqrt{2n}}\right).
\]
for all $(t,x)\in \widetilde{E(n)}$ and $f\in \HBlam$.

Furthermore, if $(t,x)$ and $(s,y)$ are elements of $\widetilde{E(n)}$, then
\begin{align*}
   R_\lambda (h,k) & = \phi_\lambda(h) (1-\langle h\cdot z_0, k\cdot z_0\rangle)^{-\lambda} \overline{\phi_\lambda(k)} \\
         & = e^{-2\pi i \lambda (x-y)} \left(1-\frac{1}{2n}\sum_{i=1}^n t_i \overline{s_i} \right)^{-\lambda} \\
         & = e^{-2\pi i \lambda (x-y)} \left(1-\frac{1}{2n}\sum_{i=1}^n t_i (s_i)^{-1} \right)^{-\lambda}
\end{align*}

Note that if $f\in L^2_{\chi_\lambda}(P(n)\cdot z_0)$, then $\widetilde{f}\in L^2(P(n)\cdot z_0)$, where \[\widetilde{f}(t) = f(t,x)\chi_\lambda(x) = f((t,x)\cdot z_0) e^{2\pi i\lambda x}\]
for any $(t,x)\in \widetilde{E(n)}$.  By Lemma~\ref{lem:kernelR}, we have that
\begin{align*}
  RR^*f(t,x) & = \int_{\R\times\T^n} f(s,y)  e^{-2\pi i \lambda (x-y)} \left( 1
      - \frac{1}{2n} \sum_{i=1}^{n} t_i (s_i)^{-1}\right)^{-\lambda}ds  \\
    & = e^{-2\pi i\lambda x}  (\widetilde{f} * \phi_{E(n)})(t)
\end{align*}
where $\phi_{E(n)}\in L^\infty(E(n))$ is defined by:
\[
   \phi_{E(n)}(t) = \left( 1 - \frac{1}{2n} \sum_{i=1}^{n} t_i\right)^{-\lambda}
\]
for all $t\in\T^n$.  In fact, since $L^\infty(E(n))\subseteq L^1(E(n))$, it follows that $\widehat{\phi_{E(n)}}$ is a bounded function on $\widehat{E(n)} = \Z^{n-1}$ and thus that and thus that $RR^*$ is a bounded operator.

By the generalized binomial theorem and the multinomial theorem, we have that
\begin{align*}
   \phi_{E(n)}(t) & = \sum_{k=0}^\infty
         \left(\begin{matrix}\lambda + k-1 \\ k\end{matrix}
        \right)  \left(\frac{1}{2n}\sum_{i=1}^n t_i\right) ^k\\
            & = \sum_{k=0}^\infty (2n)^{-k}
         \left(\begin{matrix}\lambda + k-1 \\ k\end{matrix}
        \right) \sum_{\begin{matrix}k_1,\ldots, k_n \in\N_0 \\ k_1 +\cdots + k_n =  k\end{matrix}} \frac{k!}{k_1!\cdots k_n!} t_1^{k_1}\cdots t_n^{k_n},
\end{align*}
where the generalized binomial coefficient is defined by
\[\left(\begin{matrix}\lambda + k-1 \\ k\end{matrix}
        \right) := \frac{\Gamma(\lambda+k)}{\Gamma(\lambda)\Gamma(k+1)	} = \frac{1}{kB(\lambda,k)}.
\]

Thus one sees that $\widehat{\phi_{E(n)}}(k_1,\ldots,k_n)\neq 0$ if and only if $k_i\geq 0$ for $1\leq i\leq n$. In fact,
\[\widehat{\phi_{E(n)}}(\alpha) = \left\{\begin{array}{ll}
                       (2n)^{-|\alpha|}   \frac{\Gamma(\lambda+|\alpha|)}{\Gamma(\lambda)}\frac{1}{\alpha_1!\cdots \alpha_n!}  & \alpha_1\geq 0,\ldots, \alpha_n\geq 0   \\
        0          & \text{otherwise,}
                        \end{array}\right.
\]
where $\alpha = (\alpha_1,\ldots,\alpha_n)\in\Z^{n-1}$.
An alternative approach has been presented in \cite{QGrudsky}.

\subsection{Quasi-Parabolic}
For the other maximal abelian subgroups, we describe them first by their action on $D_n$ and as subgroups of $C \SU(n,1) C^{-1}$ before moving back to the $\B^n$ picture.
The quasi-parabolic subgroup is isomorphic to $\T^{n-1}\times\R$ and acts on $D_n$ by:
\[
(t,y)\cdot (z',z_n) = (tz',z_n+y)
\]
where $(z',z_n)\in D_n$ with $z'\in\C^{n-1}$ and $z_n\in\C$, and where $t\in\T^{n-1}$ and $y\in\R$.

As a subgroup of $C \SU(n,1) C^{-1}$, we may write it as:
\[
    \left\{ \left( \left. \begin{matrix}
                 a t_1 \\
                     & \ddots \\
                     &      & a t_{n-1} \\
                     &      &     &  a & ay \\
                     &      &     &    &  a
                            \end{matrix} \right)
          \right| \begin{array}{l}
                    t_i\in\T, a\in\T, y\in\R \\
                    a^{n+1}t_1\cdots t_{n-1}=1
                  \end{array}
                  \right\} 
\]
As a subgroup of $\SU(n,1)$, we obtain:
\[\scalebox{.9}{$
    P(n) = \left\{ p_{t,y,a} = \left( \left. \begin{matrix}
                  at_1 \\
                      \left(1+\frac{|y|^2}{4}\right)^{-\lambda}& \ddots \\
                     &      &a t_{n-1} \\
           &      &     &  a(1+i\frac{y}{2}) & a(i\frac{y}{2}) \\
           &      &     &  a(-i\frac{y}{2})  &  a(1-i\frac{y}{2})
                            \end{matrix} \right)
          \right| \begin{array}{l}
                    t=(t_1,\ldots,t_{n-1})\in\T^{n-1}, \\
                    a\in\T, y\in\R, \\
                    a^{n+1}t_1\cdots t_{n-1}=1
                  \end{array}
                  \right\} 
$}\]
The action of $P(n)$ on the unit ball $\B^n$ is given by
\[
   p_{t,y,a} \cdot (z',z_n) = \left(\frac{2}{-iy z_n + 2-iy}tz', \frac{(2+iy)z_n + iy}{-iy z_n + 2-iy}\right).
\]
In particular,
\[
  p_{t,y,a} \cdot (z',0) = \left(\frac{2}{2-iy}tz', \frac{iy}{2-iy}\right)
\]

When $n=1$, the group $P(1)$ is simply connected and $P(1)_{z_0}$ is trivial, so that $P(1) \cong \widetilde{P(1)}\cong P(1)/P(1)_{z_0}\cong \R$.

When $n>1$, the subgroup of $\widetilde{\mathrm{SU}(n,1)}$ which corresponds to $P(n)$ is the group $\widetilde{P(n)}$, which we will identify with the group
\[
\widetilde{P(n)}=\{(t,y,x) \mid t\in\T^{n-1},\, y,x\in\R,\,e^{2\pi i (n+1) x} t_1\ldots t_{n-1} = 1 \}
\]
with the product
\[
   (t,y,x)\cdot(t',y',x') = (t t',y+y',x+x').
\]
We also make the identification
\[
   P(n)\cdot z_0 \cong P(n)/P(n)_{z_0} \cong \T^{n-1}\times\R
\]
The projection maps are then given by:
\[
\begin{matrix}
\widetilde{P(n)} & \rightarrow & P(n) & \rightarrow & P(n)/P(n)_{z_0}\\
   (t,y,x) & \mapsto & p_{t,y,e^{2\pi i x}} & \mapsto & (t,y).
\end{matrix}
\]
We will work out the details for the case of $n>1$.  We will leave the details of the case $n=1$ to the reader, since one really only needs to remove all references to the parameter ``$x$''.

Now fix $z_0=\left(\frac{1}{\sqrt{2(n-1)}},\ldots,\frac{1}{\sqrt{2(n-1)}},0\right)\in\B^n$.  Then for each $q=(t,y,x) \in \widetilde{P(n)}$, we have that
\[
  \Dlambda(q) = j_\lambda(q,z_0) =
       \left(e^{2\pi i x} (1-i\frac{y}{2})\right)^{-\lambda} = 2^\lambda e^{-2\pi i \lambda x}
      (2-iy)^{-\lambda}.
\]
Furthermore,
\[
   |\Dlambda(q)|^2 = \left|1-i\frac{y}{2}\right|^{-2\lambda} =  \left(1+\frac{|y|^2}{4}\right)^{-\lambda}
\]
 It follows that
$\Dlambda\in L^2_{\chi_\lambda}(P(n)\cdot z_0)$ for all $\lambda > 1/2$, and, in particular, for $\lambda > n\geq 1$.

Furthermore, one sees that, if $h=(t,y,x)$ and $k=(t',y',x')$, then
\begin{align*}
    R_\lambda(h,k) & = \Dlambda(h) (1-\langle h\cdot z_0, k\cdot z_0\rangle )^{-\lambda} \overline{\Dlambda(k)} \\
           & = e^{-2\pi i \lambda (x-x')} (1-iy/2)^{-\lambda} (1+iy'/2)^{-\lambda} \\
           & \cdot
           \left(1-\frac{1}{2(n-1)} \left\langle \frac{t}{1-iy/2},\frac{t'}{1+iy'/2}\right\rangle - \frac{iy/2}{1-iy/2}\cdot \frac{-iy'/2}{1+iy'/2}\right)^{-\lambda} \\
           & = e^{-2\pi i \lambda (x-x')} \\
           & \cdot \left( (1-iy/2)(1+iy'/2)- \frac{1}{2(n-1)} \sum_{i=1}^{n-1} t_i (t'_i)^{-1} -
           (iy/2)(-iy'/2)\right)^{-\lambda} \\
           & = e^{-2\pi i \lambda (x-x')} \left( 1 - \frac{1}{2} i (y-y') -  \frac{1}{2(n-1)} \sum_{i=1}^{n-1} t_i (t'_i)^{-1}\right)^{-\lambda}
  \end{align*}
If $f\in L^2_{\chi_\lambda}(P(n)\cdot z_0)$, then $\widetilde{f}\in L^2(P(n))$, where \[\widetilde{f}(t,y) = f(t,y,x)\chi_\lambda(x) = f(t,y,x) e^{2\pi i\lambda x}\]
for any $(t,y,x)\in \widetilde{P(n)}$.

By Lemma~\ref{lem:kernelR}, we can write the operator $RR^*$ as:
\begin{align*}
  RR^*f(t,y,x)  = & \int_{\R\times\T^{n-1}} f(t',y',z')  e^{-2\pi i \lambda (x-x')} \\
       & \cdot \left( 1 - \frac{1}{2} i (y-y')
      - \frac{1}{2(n-1)} \sum_{i=1}^{n-1} t_i (t'_i)^{-1}\right)^{-\lambda}dt'dy'  \\
     = & e^{-2\pi i\lambda x} ( \widetilde{f} * \phi_{P(n)})(t,y)
\end{align*}
where $\phi_{P(n)}\in L^\infty(P(n))$ is defined by:
\[
   \phi_{P(n)}(t,y) = \left( 1 - \frac{1}{2} i y - \frac{1}{2(n-1)} \sum_{i=1}^{n-1} t_i\right)^{-\lambda}  = 2^{\lambda}\left( 2 - i y - \frac{1}{n-1} \sum_{i=1}^{n-1} t_i\right)^{-\lambda}
\]
By noting that $|\frac{1}{n-1} \sum_{i=1}^{n-1} t_i| \leq 1$, we see that

\begin{align*}
   \left| 2 - i y - \frac{1}{n-1} \sum_{i=1}^{n-1} t_i\right| & = \sqrt{(2-\frac{1}{n-1}\operatorname{Re}  \sum_{i=1}^{n-1} t_i)^2 + (y+\frac{1}{n-1}\operatorname{Im} \sum_{i=1}^{n-1} t_i)^2} \\
          & \geq \sqrt{1+(|y|-1)^2},
\end{align*}
and hence that
\[
|\phi_{P(n)}(t,y)| \leq |1+(|y|-1)^2|^{-\lambda/2},
\]
from which it follows that $\phi_{P(n)}\in L^1(P(n))$ for all $\lambda > 1$ and, in particular, for all $\lambda > n\geq 1$. Thus $RR^*$ is a bounded operator.

After taking the Fourier transform in the $y$ variable, one obtains (using standard Fourier transform tables, see for instance \cite{B}) that $\cF(\phi_{P(n)})(t,\xi) = 0$ if $\xi<0$, while for $\xi>0$ one has:

\begin{align*}
   \cF_y (\phi_{P(n)}) (t,\xi) & = \frac{2^{\lambda}}{\Gamma(\lambda)} \sqrt{2\pi} \xi^{\lambda-1}  \exp\left(-\big(2- \frac{1}{n-1} \sum_{i=1}^{n-1} t_i\big)\xi\right) \\
   & = \frac{2^{\lambda}\sqrt{2\pi}}{\Gamma(\lambda)} \xi^{\lambda-1} e^{-2\xi} \sum_{k=0}^{\infty} \frac{\xi^{k}}{k!}\big(\frac{1}{n-1} \sum_{i=1}^{n-1} t_i \big)^{k} \\
     & = \sum_{k=0}^{\infty}\frac{2^{\lambda+1/2}\sqrt{\pi}}{k!\Gamma(\lambda)} \cdot \frac{\xi^{\lambda+k-1}}{(n-1)^k} e^{-2\xi}  \scalebox{.9}{$\displaystyle \Big(\sum_{\begin{matrix}k_1,\ldots, k_{n-1} \in\N_0 \\ k_1 +\cdots+ k_{n-1} =  k\end{matrix}} \frac{k!}{k_1!\cdots k_{n-1}!} t_1^{k_1}\cdots t_n^{k_n}\Big)$}.
\end{align*}

It follows that the full Fourier transform of $\phi_{P(n)}$ (that is, the Fourier transform of $\phi_{P(n)}$ as a function on $\T^{n-1}\times\R$) is:
\[
    \widehat{\phi_{P(n)}}(\alpha,\xi) = \left\{\begin{array}{ll}
                       \frac{2^{\lambda+1/2}\sqrt{\pi}}{\Gamma(\lambda)} e^{-2\xi} \frac{\xi^{\lambda+|\alpha|-1}}{(n-1)^k}    \frac{1}{\alpha_1!\cdots \alpha_{n-1}!}  & \xi>0,\alpha_1\geq 0,\ldots, \alpha_{n-1}\geq 0   \\
        0          & \text{otherwise,}
                        \end{array}\right.
\]
where $\xi\in\R$ and $\alpha=(\alpha_1,\ldots,\alpha_n)\in\Z^{n-1}$.

\subsection{Quasi-Hyperbolic}
The quasi-hyperbolic abelian subgroup is isomorphic to $\T^{n-1}\times\R^+$ and acts on $D_n$ by:
\[
    (t,r)\cdot (z',z_n) = (rtz',r^2z_n)
\]
where $(z',z)\in D_n$ with $z'\in\C^{n-1}$ and $z\in\C$, and where $t\in\T^{n-1}$ and $r\in\R^+$.

As a subgroup of $C \SU(n,1) C^{-1}$, we may write it as:
\[
        \left\{ \left( \left. \begin{matrix}
               at_1 \\
                     & \ddots \\
                     &      & a t_{n-1} \\
                     &      &     &  ar &   \\
                     &      &     &    &     ar^{-1}
                            \end{matrix} \right)
          \right| \begin{array}{l}
                    t_i\in\T, a\in\T, r\in\R^+ \\
                    a^{n+1}t_1\cdots t_{n-1}=1
                  \end{array}
                  \right\} 
\]

As a subgroup of $\SU(n,1)$, we obtain:
\[
H(n) = \left\{ \left( \left. \begin{matrix}
     at_1 \\
        & \ddots \\
        &      & at_{n-1} \\
      &  &  & a\frac{r+r^{-1}}{2} & a\frac{r-r^{-1}}{2} \\
      &  &  & a\frac{r-r^{-1}}{2} & a\frac{r+r^{-1}}{2}
                            \end{matrix}\right)
          \right| \begin{array}{l}
                    t=(t_1,\ldots, t_{n-1})\in\T^{n-1},\\
                    a\in\T, \, r\in\R^+, \\
                    a^{n+1}t_1\cdots t_{n-1}=1
                  \end{array}
                  \right\} 
\]
After the substitution $s = \log r$, we can write:
\[\scalebox{.95}{$
H(n) = \left\{ h_{t,s,a}=\left( \left. \begin{matrix}
     at_1 \\
        & \ddots \\
        &      & at_{n-1} \\
      &  &  & a\cosh s & a\sinh s \\
      &  &  & a\sinh s & a\cosh s
                            \end{matrix}\right)
          \right| \begin{array}{l}
                    t=(t_1,\ldots, t_{n-1})\in\T, \\
                    a\in\T,\, s\in\R, \\
                    a^{n+1}t_1\cdots t_{n-1}=1
                  \end{array}
                  \right\} 
$}\]
We see that the action of $H(n)$ on the unit disk $\B^n$ is given by
\[
   h_{t,s,a} \cdot (z',z_n) = \left(\frac{tz'}{(\sinh s)z_n + \cosh s}, \frac{(\cosh s) z_n + \sinh s}{(\sinh s)z_n + \cosh s}\right).
\]
In particular,
\[
  h_{t,s,a} \cdot (z',0) = \left(\frac{t}{\cosh s}z', \tanh s\right).
\]
for all $(z',0)\in\B^n$, where $z'\in\C^{n-1}$.

When $n=1$, the group $H(1)$ is simply connected and $H(1)_{z_0}$ is trivial, so that $P(1) \cong \widetilde{H(1)}\cong H(1)/H(1)_{z_0}\cong \R$.

When $n>1$, the subgroup of $\widetilde{\mathrm{SU}(n,1)}$ which corresponds to $H(n)$ is the group $\widetilde{H(n)}$, which we will identify with the group
\[
\widetilde{H(n)}=\{(t,s,x) \mid t\in\T^{n-1},\, s,x\in\R,\,e^{2\pi i (n+1) x} t_1\ldots t_{n-1} = 1 \}
\]
with the product
\[
   (t,s,x)\cdot(t',s',x') = (t t',s+s',x+x').
\]
We also make the identification:
\[
   H(n)\cdot z_0 \cong H(n)/H(n)_{z_0} \cong \T^{n-1}\times\R
\]
The projection maps are then given by:
\[
\begin{matrix}
\widetilde{H(n)} & \rightarrow & H(n) & \rightarrow & H(n)/H(n)_{z_0}\\
   (t,s,x) & \mapsto & n_{t,s,e^{2\pi i x}} & \mapsto & (t,s).
\end{matrix}
\]
As in the Parabolic case, we will work out the details for the case of $n>1$, leaving the details of the case $n=1$ to the reader.

Now fix $z_0=\left(\frac{1}{\sqrt{2(n-1)}},\ldots,\frac{1}{\sqrt{2(n-1)}},0\right)\in\B^n$.  Then for each $q=(t,s.x) \in \widetilde{H(n)}$, we have that
\[
  \Dlambda(q) = j_\lambda(q,z_0) =
       (e^{2\pi i x} \cosh s)^{-\lambda} = e^{-2\pi i \lambda x}
      (\cosh s)^{-\lambda}.
\]
Then
\[
   |\Dlambda(q)|^2 = (\cosh s)^{-2\lambda}
\]
for all $q=(t, s, x)\in\widetilde{H(n)}$.  It follows that
$\Dlambda\in L^2_{\chi_\lambda}(H(n)\cdot z_0)$ whenever $2\lambda>1$, that is, $\lambda > 1/2$.  In particular, this holds for all $\lambda > n\geq 1$.

Furthermore, one sees that, if $h=(t,s,x)$ and $k=(t',s',x')$, then
\begin{align*}
    R_\lambda(h,k)  = & \Dlambda(h) (1-\langle h\cdot z_0, k\cdot z_0\rangle )^{-\lambda} \overline{\Dlambda(k)} \\
            = & e^{-2\pi i \lambda (x-x')} (\cosh s)^{-\lambda} (\cosh s')^{-\lambda} \\
            & \cdot \left(1-\frac{1}{2(n-1)} \left\langle \frac{t}{\cosh s},\frac{t'}{\cosh s'}\right\rangle - \tanh s \tanh s'\right)^{-\lambda} \\
            = & e^{-2\pi i \lambda (x-x')} \left( \cosh(s-s') - \frac{1}{2(n-1)} \sum_{i=1}^{n-1} t_i (t'_i)^{-1}\right)^{-\lambda}
  \end{align*}

If $f\in L^2_{\chi_\lambda}(H(n)\cdot z_0)$, then $\widetilde{f}\in L^2(H(n))$, where \[\widetilde{f}(t,y) = f(t,s,x)\chi_\lambda(x) = f(t,s,x) e^{2\pi i\lambda x}\]
for any $(t,s,x)\in \widetilde{H(n)}$.  By Lemma~\ref{lem:kernelR}, we have that
\begin{align*}
  RR^*f(t,y,x) = & \int_{\T^{n-1}\times\R^+} f(t',y',z')  e^{-2\pi i \lambda (x-x')} \\
       & \cdot  \left( \cosh(s-s')
      - \frac{1}{2(n-1)} \sum_{i=1}^{n-1} t_i (t'_i)^{-1}\right)^{-\lambda}dt'dy'  \\
     = & e^{-2\pi i\lambda x} ( \widetilde{f} * \phi_{H(n)})(t,y),
\end{align*}
where $\phi_{H(n)}\in L^\infty(H(n)\cdot z_0)$ is defined by:
\[ \phi_{H(n)}(t,s) = \left( \cosh(s) - \frac{1}{2(n-1)} \sum_{i=1}^{n-1}t_i\right)^{-\lambda}.
\]
In fact, since $|\frac{1}{2(n-1)}\sum_{i=1}^{n-1} t_i | \leq \frac{1}{2}$,
one sees that
\[
|\phi_{H(n)}(t,s)| \leq \left|\cosh(s)-\frac{1}{2}\right|^{-\lambda}. \]
Thus, $\phi_{H(n)}\in L^1(H(n)\cdot z_0)$ if $\lambda > 1$ and, in particular, for all $\lambda > n \geq 1$.

By once again using the generalized binomial theorem, we obtain
\begin{align*}
   \phi_{H(n)}(t,s)
        & = \sum_{k=0}^\infty \frac{\Gamma(-\lambda+1)}{\Gamma(-\lambda-k+1)\Gamma(k+1)} (\cosh x)^{-\lambda-k}\left(-\frac{1}{2(n-1)} \sum_{i=1}^{n-1} t_i\right)^k \\
        & = \sum_{k=0}^\infty (2(n-1))^{-k}\frac{\Gamma(\lambda+k)}{\Gamma(\lambda)\Gamma(k+1)} (\cosh x)^{-\lambda-k} \left(\sum_{i=1}^{n-1} t_i\right)^k,
\end{align*}
where in going from the first to the second line we use the following easily-verified identity:
\[
 \frac{\Gamma(-\lambda+1)}{\Gamma(-\lambda-k+1)} (-1)^k = \frac{\Gamma(\lambda+k)}{\Gamma(\lambda)}
\]

By using a standard Fourier transform table for $\widehat{\cosh^{-\lambda-k}}$ (see \cite{B}, for instance), we obtain that
\begin{align*}
\widehat{\phi_{H(n)}}(\alpha,\xi)
        =&  \frac{1}{\sqrt{2\pi}}(2(n-1))^{-|\alpha|}  \frac{\Gamma(\lambda+|\alpha|)}{\Gamma(\lambda)\Gamma(|\alpha|+1)}\\
     & \cdot 2^{\lambda+|\alpha|} \mathrm{B}\left(\frac{1}{2}(\lambda + |\alpha| + i\xi),\lambda +|\alpha|-\frac{1}{2}(\lambda + |\alpha| + i\xi)\right)
     \frac{|\alpha|!}{\alpha_1!\cdots\alpha_{n-1}!} \\
      = & \frac{2^\lambda}{ \sqrt{2\pi}(n-1)^{|\alpha|}} \frac{\Gamma(\frac{1}{2}(\lambda+|\alpha|+i\xi))\Gamma(\frac{1}{2}(\lambda+|\alpha|-i\xi))}{\Gamma(\lambda+|\alpha|)}\cdot \\
      & \cdot \frac{\Gamma(\lambda+|\alpha|)}{\Gamma(\lambda)}\frac{1}{\alpha_1!\cdots\alpha_{n-1}!} \\
       = & \frac{2^\lambda}{\sqrt{2\pi}(n-1)^{|\alpha|}} \frac{\Gamma(\frac{1}{2}(\lambda+|\alpha|+i\xi))\Gamma(\frac{1}{2}(\lambda+|\alpha|-i\xi))}{\Gamma(\lambda)}
       \frac{1}{\alpha_1!\cdots\alpha_{n-1}!}
\end{align*}
for all $\alpha\in\Z^{n-1}$ and all $\xi\in\R$ such that $\alpha_1\geq 0,\cdots \alpha_{n-1}\geq 0$.  Furthermore,  $\widehat{\phi_{H(n)}}(\alpha,\xi)=0$ for all $(\alpha,\xi)\in\Z^{n-1}\times\R$ such that $\alpha_i<0$ for some $1\leq i\leq n-1$.
\subsection{Nilpotent}
\label{sec:nilpotent}
The ``nilpotent'' abelian subgroup is isomorphic to $\R^{n-1}\times\R$ and acts on $D_n$ by:
\[
    (b,s)\cdot (z',z_n) = (z'+b,z_n+2i\langle z', b\rangle+s+i|b|^2)
\]
where $(z',z_n)\in D_n$ with $z'\in\C^{n-1}$ and $z_n\in\C$, and where $b\in\R^{n-1}$ and $s\in\R$.

As a subgroup of $C \SU(n,1) C^{-1}$, we may write it as:
\[
    \left\{ \left( \left. \begin{matrix}
               1     &        &           &    & b_1        \\
                     & \ddots &           &    & \vdots     \\
                     &        &         1 &    & b_{n-1}    \\
               2ib_1 & \cdots & 2ib_{n-1} &  1 & s +i|b|^2  \\
                     &        &           &    &  1
                            \end{matrix} \right)
          \right| \begin{array}{l}
                    b_i\in\R,  s\in\R
                  \end{array}
                  \right\}.
\]
Note that it can be shown that each of the above matrices has determinant one.

As a subgroup of $\SU(n,1)$, we obtain:
\[\scalebox{.9}{
         $N(n) =  \left\{ n_{s,b} = \left( \left. \begin{matrix}
         1     &        &           & -ib_1     & -ib_1     \\
               & \ddots &           & \vdots    & \vdots    \\
               &        &     1     & -ib_{n-1} & -ib_{n-1}  \\
            -ib_1 & \cdots & -ib_{n-1} &
               \frac{1}{2}(is-|b|^2)+1 & \frac{1}{2}(is-|b|^2) \\
             ib_1 & \cdots & ib_{n-1} &
               \frac{1}{2}(-is+|b|^2)   & \frac{1}{2}(-is+|b|^2)+1
                            \end{matrix} \right)
          \right| \begin{array}{l}
                    b_i\in\R,\\  s\in\R

                  \end{array}
                  \right\}$
   }
\]
Since this group is simply connected, it is isomorphic to its covering group $\widetilde{N(n)}$ sitting inside the simply-connected group $\widetilde{SU(n,1)}$.

Now fix $z_0=0\in\B^n$.  Then for each $h_{s,b}\in N(n)$, we have that the action on $z_0$ is given by
\[
 n_{s,b}\cdot z_0 = \frac{1}{\frac{1}{2}(-is + |b|^2)+1}\left(-ib_1,\ldots,-ib_{n-1},\frac{1}{2}(is - |b|^2)\right)
\]
Note also that
\[
\Dlambda(n_{s,b}) = j_\lambda(n_{s,b},0)=\left(\frac{1}{2}\left(-is+|b|^2\right) +1\right)^{-\lambda}
\]
Note that this function can be made to be well-defined on $N(n)$ as long as a branch cut is made for the map $x\mapsto x^{-\lambda}$ on the right half-plane of $\C$.

Then
\begin{align*}
\int_{N(n)}|\Dlambda(n_{s,b})|^2 dh
       & = \int_{\R^{n-1}} \int_\R \left|\frac{1}{2}\left(-is+|b|^2\right)
                   +1\right|^{-2\lambda}ds db \\
       & = \left(\frac{1}{2}\right)^{-2\lambda} \int_{\R^{n-1}}
                  \int_\R (s^2+(|b|^2+2)^2)^{-\lambda}ds db \\
       & = 4^{\lambda}\int_{\R^{n-1}}
         (|b|^2+2)^{-2\lambda} \int_\R \left(\left(
         \frac{s}{|b|^2+2}\right)^2+1\right)^{-\lambda}ds db \\
       & = 4^{\lambda}\int_{\R^{n-1}}
         (|b|^2+2)^{-2\lambda+1} \int_\R \left(\left(
         \frac{s}{|b|^2+2}\right)^2+1\right)^{-\lambda}\frac{1}{|b|^2+2}ds db \\
      & = 4^{\lambda}\int_{\R^{n-1}}
         (|b|^2+2)^{-2\lambda+1}db \int_\R (s^2+1)^{-\lambda}ds  \\
\end{align*}
Thus, $\Dlambda\in L^2_{\chi_\lambda}(N(n)\cdot z_0)$ if and only if $\lambda > 1/2$ and $\int_{\R^{n-1}} (|b|^2+1)^{-2\lambda+1}db<\infty$.  In particular, this holds for all $\lambda > n$.  If $n-1=1$, then this last condition is equivalent to $2(-2\lambda+1)<-1$.  If $n-1>1$, then the condition is equivalent to $\int_{\R} (x^2+1)^{-2\lambda+1+(n-2)/2}db<\infty$, which in turn is true if and only if $2(-2\lambda+n/2)<-1$.  To sum everything up, we have that $\Dlambda\in L^2_{\chi_\lambda}(N(n)\cdot z_0)$ if and only if $\lambda>\frac{n+1}{4}$.

Furthermore, one sees that, if $n_{s,b}, n_{s',b'}\in N(n)$, then
\begin{align*}
    R_\lambda(n_{s,b},n_{s',b'})  = &\Dlambda(n_{s,b}) (1-\langle n_{s,b}\cdot z_0, n_{s',b'}\cdot z_0\rangle )^{-\lambda} \overline{\Dlambda(h_{s',b'})} \\
           = &  \left(\frac{1}{2}\left(-is+|b|^2\right) +1\right)^{-\lambda} \left(\frac{1}{2}\left(is'+|b'|^2\right) +1\right)^{-\lambda} \\
            & \cdot \Bigg(1-\frac{1}{\frac{1}{2}(-is + |b|^2)+1} \frac{1}{\frac{1}{2}(is' + |b'|^2)+1} \cdot \\
            & \cdot \bigg(\left\langle -ib,-ib'\right\rangle +\bigg( \frac{1}{2}(is - |b|^2)  \bigg) \bigg( \frac{1}{2}(-is' - |b'|^2)  \bigg)   \bigg)\Bigg)^{-\lambda} \\
            =  &  \Bigg (\left(\frac{1}{2}\left(-is+|b|^2\right) +1\right) \left(\frac{1}{2}\left(is'+|b'|^2\right) +1\right)- \langle b, b' \rangle \\
           & -\left( \frac{1}{2}(is - |b|^2)  \right) \left( \frac{1}{2}(-is' - |b'|^2)  \right) \Bigg )^{-\lambda} \\
           = &  \left( \frac{1}{2} (-i(s-s') + |b-b'|^2)+1 \right)^{-\lambda}
  \end{align*}

 By Lemma~\ref{lem:kernelR}, we have that
\begin{align*}
  RR^*f(s,b) & = \int_{\R\times\R^{n-1}} f(s',b')  \left(
     \frac{1}{2} (-i(s-s') + |b-b'|^2)+1 \right)^{-\lambda}ds'db'  \\
    & = (f * \phi_{N(n)})(t,y)
\end{align*}
for all $f\in L^2_{\xi_\lambda}(N(n)\cdot z_0)$, where $\phi_{N(n)}\in L^\infty(N(n))$ is defined by:
\[ \phi_{N(n)}(s,b)= 2^\lambda \left( -is + |b|^2+2 \right)^{-\lambda}.
\]
In fact, we see that $D_\lambda = \phi_{N(n)}$ and thus the previous calculation for $D_\lambda$ shows that $\phi_{N(n)}\in L^1(N(n)\cdot z_0)$ for all $\lambda > (n+1)/2$ and hence for all $\lambda > n\geq 1$.

One can check that, taking the Fourier transform first in the ``$s$'' variable, one has:
\[
    \cF_s({\phi_{N(n)}})(y, b) = \left\{ \begin{array}{ll}\sqrt{2\pi}2^{\lambda} y^{\lambda-1}e^{(-|b|^2-2)y}, & y >0 \\
         0, & y<0\end{array}\right.
\]
Taking the Fourier transform in the ``$b$'' variable, we obtain:
\[
    \widehat{\phi_{N(n)}}(y, \xi) = \left\{ \begin{array}{ll}\sqrt{2\pi} 2^\lambda y^{\lambda-1}\left(\frac{1}{2y}\right)^{\frac{n-1}{2}}e^{-2y}e^{-|\xi|^2/8y}, & y >0 \\
         0, & y<0\end{array}\right.
\]

\subsection{Quasi-Nilpotent}
\label{sec:quasinilpotent}
The ``quasi-nilpotent'' abelian subgroups are isomorphic to $\T^k\times \R^{n-k-1}\times\R$, where $1\leq k \leq n-2$ (the case $k=0$ reduces to the ``nilpotent'' case and $k=n-1$ corresponds to the quasi-parabolic case) and act on $D_n$ by:
\[
    (t,b,h)\cdot (z',z'',z_n) = (tz',z''+b,z_n+2i\langle z'', b\rangle+s+i|b|^2),
\]
where $(z',z'',z_n)\in D_n$ with $z'\in \C^{k}$, $z''\in\C^{n-k-1}$, and $z_n\in\C$, and where $t\in\T^k$, $b\in\R^{n-k-1}$, and $s\in\R$.

As a subgroup of $C \SU(n,1) C^{-1}$, we may write it as:
\[\scalebox{.8}{$
    \left\{ \left( \left. \begin{matrix}
 t_1  &        &   &    \\
      & \ddots &  \\
      &        & t_k &  \\
        &  &  &  1     &        &           &    & b_1        \\
        &  &  &       & \ddots &           &    & \vdots     \\
        &  &  &       &        &         1 &    & b_{n-k-1}    \\
        &  &  & 2ib_1 & \cdots & 2ib_{n-k-1} &  1 & s +i|b|^2  \\
        &  &  &       &        &           &    &  1
                            \end{matrix} \right)
          \right| \begin{array}{l}
                    b_i\in\R,  s\in\R, t_j \in \T
                  \end{array}
                  \right\}.$
}\]
Note that it can be shown that each of the above matrices has determinant one.

As a subgroup of $\SU(n,1)$, we obtain:
\[\scalebox{.68}{
         $N(k,n) =  \left\{ n_{t,s,b,a} = a\left( \left. \begin{matrix}
 t_1  &        &   &    \\
      & \ddots &  \\
      &        & t_k &  \\

   &  &  &   1     &        &           & -ib_1     & -ib_1     \\
   &  &  &      & \ddots &           & \vdots    & \vdots    \\
   &  &  &     &        &     1     & -ib_{n-1} & -ib_{n-1}  \\
   &  &  &  -ib_1 & \cdots & -ib_{n-1} &
               \frac{1}{2}(is-|b|^2)+1 & \frac{1}{2}(is-|b|^2) \\
   &  &  &   ib_1 & \cdots & ib_{n-1} &
               \frac{1}{2}(-is+|b|^2)   & \frac{1}{2}(-is+|b|^2)+1
                            \end{matrix} \right)
          \right| \begin{array}{l}
                    a\in\T, \\ t_i\in \T, \\ b_i\in\R,\\  s\in\R, \\
                    a^{n+1} t_1\cdots t_k = 1
                  \end{array}
                  \right\}.$
   }
\]
Since this group is simply connected, it is isomorphic to its covering group $\widetilde{N(k,n)}$ sitting inside the simply-connected group $\widetilde{SU(n,1)}$.

Now fix $z_0=\left(\frac{1}{\sqrt{2k}},\ldots,\frac{1}{\sqrt{2n}}, 0, \ldots, 0\right)\in\B^n$, where the first $k$ terms are nonzero.  Then for each $h_{s,b}\in N(n)$, we have that the action on $z_0$ is given by
\[
 n_{t,s,b,a}\cdot z_0 = \frac{1}{\frac{1}{2}(-is + |b|^2)+1}\left(\frac{t_1}{\sqrt{2k}},\ldots, \frac{t_k}{\sqrt{2k}}, -ib_1,\ldots,-ib_{n-1},\frac{1}{2}(is - |b|^2)\right).
\]

The subgroup of $\widetilde{\mathrm{SU}(n,1)}$ which corresponds to $N(k,n)$ is the group $\widetilde{N(k,n)}$, which we will identify with the group
\[
\widetilde{N(k,n)}=\{(t,s,b,x) \mid t\in\T^{k},b\in \R^{n-k-1}, s,x\in\R,\,e^{2\pi i (n+1) x} t_1\ldots t_k = 1 \}
\]
with the product
\[
   (t,s,b,x)\cdot(t',s',b',x') = (t t',s+s',b'+b', x+x').
\]
We also make the identification
\[
   N(k,n)\cdot z_0 \cong N(k,n)/N(k,n)_{z_0} \cong \T^{k}\times\R\times\R^{n-k-1}.
\]
The projection maps are then given by:
\[
\begin{matrix}
\widetilde{N(k,n)} & \rightarrow & N(k,n) & \rightarrow & N(k,n)/N(k,n)_{z_0}\\
   (t,s,b,x) & \mapsto & n_{t,s,b,e^{2\pi i x}} & \mapsto & (t,s,b).
\end{matrix}
\]

Now fix $q=(t,s,b,x)$ in the group $\widetilde{N(k,n)}$.  Then
\begin{align*}
\Dlambda(q) = j_\lambda(q,z_0)=& \left(e^{2\pi i x} \left(\frac{1}{2}(-is+|b|^2) +1\right)\right)^{-\lambda} \\
       = & e^{-2\pi i \lambda x} \left(\frac{1}{2}(-is+|b|^2) +1\right)^{-\lambda}
\end{align*}
A comparison with the corresponding calculation in Section~\ref{sec:nilpotent} shows that $D_\lambda\in L^2_{\xi_\lambda}(N(n,k)\cdot z_0)$ if $\lambda > \frac{n+1-k}{4}$.  In particular, this holds for all $\lambda > n$.

Furthermore, one sees that, if $h=(t,s,b,x), k=t',s',b',x'\in N(n)$, then
\begin{align*}
    R_\lambda(h,k)  = &\Dlambda(h) (1-\langle h\cdot z_0, k\cdot z_0\rangle )^{-\lambda} \overline{\Dlambda(k)} \\
           = &  e^{-2\pi i \lambda (x-x')}\left(\frac{1}{2}\left(-is+|b|^2\right) +1\right)^{-\lambda} \left(\frac{1}{2}\left(is'+|b'|^2\right) +1\right)^{-\lambda} \\
            & \cdot \Bigg(1-\frac{1}{\frac{1}{2}(-is + |b|^2)+1} \frac{1}{\frac{1}{2}(is' + |b'|^2)+1} \cdot \\
            & \cdot \bigg(\frac{1}{2k}\langle t,t'\rangle+\langle -ib,-ib'\rangle+\bigg( \frac{1}{2}(is - |b|^2)  \bigg) \bigg( \frac{1}{2}(-is' - |b'|^2)  \bigg)   \bigg)\Bigg)^{-\lambda} \\
            =  &  e^{-2\pi i \lambda(x-x')} \Bigg (\left(\frac{1}{2}\left(-is+|b|^2\right) +1\right) \left(\frac{1}{2}\left(is'+|b'|^2\right) +1\right) \\
           & - \langle t,t'\rangle-\langle b, b' \rangle-\left( \frac{1}{2}(is - |b|^2)  \right) \left( \frac{1}{2}(-is' - |b'|^2)  \right) \Bigg )^{-\lambda} \\
           = & e^{-2\pi i \lambda(x-x')} \left( \frac{1}{2} (-i(s-s') + |b-b'|^2)+1 -\frac{1}{2k}\sum_{i=1}^k t_i (t'_i)^{-1}\right)^{-\lambda}
  \end{align*}

As before, we note that if $f\in L^2_{\chi_\lambda}(N(k,n)\cdot z_0)$, then $\widetilde{f}\in L^2(N(k,n)\cdot z_0)$, where \[\widetilde{f}(t,s,b) = f((t,s,b,x)\cdot z_0)\chi_\lambda(x) = f((t,s,b,x)\cdot z_0) e^{2\pi i\lambda x}\]
for any $(t,s,b,x)\in \widetilde{P(n)}$.

 By Lemma~\ref{lem:kernelR}, we have that
\begin{align*}
  RR^*f(t,s,b,x)  = & \int_{\T^k\times\R\times\R^{n-1-k}} f(t',s',b') e^{-2\pi i \lambda(x-x')} \\
      & \cdot \left( \frac{1}{2} (-i(s-s') + |b-b'|^2)+1-\frac{1}{2k}\sum_{i=1}^k t_i (t'_i)^{-1} \right)^{-\lambda}dt'ds'db'  \\
     = & e^{-2\pi i x }(\widetilde{f} * \phi_{N(k,n)})(t,s,b)
\end{align*}
for all $f\in L^2(N(k,n)\cdot z_0)$, where $\phi_{N(k,n)}\in L^\infty(N(k,n))$ is defined by:
\[ \phi_{N(k,n)}(t,s,b)= 2^\lambda \left( -is + |b|^2+2 -\frac{1}{k}\sum_{i=1}^k t_i\right)^{-\lambda}.
\]
As before, one can show that $\phi_{N(k,n)}\in L^1(N(k,n)\cdot z_0)$.

One can check that, taking the Fourier transform first in the ``$s$'' variable, one obtains $\cF_s(\phi_{P(n)})(t,y,b)=0$ for $y<0$, and for $y>0$, one has:
\begin{align*}
   \cF_s (\phi_{N(k,n)}) (t,y,b) &= \frac{2^{\lambda}}{\Gamma(\lambda)} \sqrt{2\pi} y^{\lambda-1}  \exp\left(-\big(2+|b|^2- \frac{1}{k} \sum_{i=1}^{k} t_i\big)y\right) \\
   &= \frac{2^{\lambda+1/2}\sqrt{\pi}}{\Gamma(\lambda)} y^{\lambda-1} e^{-2y}e^{-|b|^2 y} \sum_{m=0}^{\infty} \frac{y^{m}}{m!}\big(\frac{1}{k} \sum_{i=1}^{k} t_i \big)^{m} \\
   &= \sum_{m=0}^{\infty}\frac{2^{\lambda+1/2}\sqrt{\pi}}{m!\Gamma(\lambda)} \cdot \frac{y^{\lambda+m-1}}{k^m} e^{-2y}e^{-|b|^2y}  \\
   &\quad\times \sum_{\substack{m_1,\ldots, m_{k} \in\N_0 \\ m_1 +\cdots + m_{k} =  m}} \frac{m!}{m_1!\cdots m_{k}!} t_1^{m_1}\cdots t_n^{m_k}.
\end{align*}

Taking the Fourier transform in the ``$b$'' and ``$t$'' variables, we obtain:
\[
    \widehat{\phi_{N(k,n)}}(\alpha,y, \xi) = \frac{2^{\lambda-(n-k)/2}\sqrt{\pi}}{\Gamma(\lambda)
     k^{|\alpha|}}    y^{\lambda+|\alpha|-1-(n-k)/2}e^{-2y}e^{-|\xi|^2/8y} \frac{1}{\alpha_1!\cdots \alpha_k!},
\]
where $\alpha\in\N^k$, $y>0$, and $\xi\in\R^{n-1-k}$.  If  $y\leq 0$ or else $\alpha\in\Z^k$ but $\alpha_i<0$  for some $1\leq i\leq k$, then $\widehat{\phi_{N(k,n)}}(\alpha,y,\xi)=0$.

\section{Spectrum of Toeplitz operators with symbols invariant under restriction-injective subgroups}
\label{sec:spectrum}
In this section, we show in Theorem~\ref{thm:mainspectraltheorem} that if a Toeplitz operator for a complex bounded symmetric domain $G/K$ has a symbol that is invariant under a subgroup $H\le G$ with restriction-injective orbit $H\cdot z_0$ such that $h \mapsto j_\lambda(h,z_0)$ lies in $L^2_{\chi_\lambda}(H\cdot z_0)$, then it can be written as a convolution operator using the Segal-Bargman transform that was defined in Section~\ref{subsec:restrictionprinciple}. These results are stated for the case of $G/K= \B^n$, but hold for any complex bounded symmetric domain $G/K$ and any $\lambda\in \R$ such that the holomorphic discrete series representation $\pi_\lambda$ can be defined, since only the basic properties of the reproducing kernel and the cocycle condition for $j_\lambda$ are used. Finally, we apply the results of the previous section to calculate the spectrum of a Toeplitz operator $T^{(\lambda)}_\varphi:\HBlam\rightarrow\HBlam$ with $H$-invariant symbol $\varphi \in L^{\infty}(\B^n)^H$, where $H$ is a maximal abelian subgroup of $\SU(n,1)$.

We begin by supposing that $H$ is any subgroup of $G$ for which the results of Section~\ref{subsec:restrictionprinciple} hold:  that is, we suppose that at least one orbit $H\cdot z_0$ is restriction injective and the map $D_\lambda: h\mapsto j_\lambda(h,z_0)$ lies in $L^2_{\chi_\lambda}(H\cdot z_0)$.  For all  $f\in \Blam$ that lie in the domain of $(\sqrt{RR^*})^{-1}$, we can write
\begin{align*}
 U_\lambda^* T^{(\lambda)}_\varphi U_\lambda f & = (\sqrt{RR^*})^{-1} R T_\varphi^{(\lambda)}R^* (\sqrt{RR^*})^{-1} f\\
    & = (RR^*)^{-1}  RT_\varphi^{(\lambda)} R^* f,
     \end{align*}
since all $\widetilde{H}$-intertwining operators on $\Blam$ commute because the representation is multiplicity free.  One can explicitly write $RT_\varphi^{(\lambda)} R^*$ as a convolution operator using the following theorem:

\begin{theorem}
\label{thm:mainspectraltheorem}
Let $H\subseteq G$ such that for some $z_0\in \B^n$, the orbit $H\cdot z_0$ is restriction injective and the function $h\mapsto j_\lambda(h,z_0)$ lies in $L^2_{\chi_\lambda}(H\cdot z_0)$, as in Section~\ref{sec:toeplitzintro}.  If $\varphi \in L^\infty(\B^n)^H$ is an $H$-invariant symbol on $\B^n$, then the operator
\[
     R  T^{(\lambda)}_\varphi R^*: \Blam\rightarrow \Blam
\]
is given by
\[
    R  T^{(\lambda)}_\varphi R^* f = f * \nu_\varphi
\]
for all $f\in L^2_{\chi_\lambda}(H\cdot z_0)$, where $\nu_\varphi: H/H_{z_0} \rightarrow \C$ is defined by:
\[
   \nu_\varphi (h\cdot z_0) = j_\lambda(h,z_0)\langle \varphi\, K_{z_0} , K_{h\cdot z_0} \rangle _{L^2(\B^n,\mu_\lambda)} = \int_{\B^n} \varphi(z) K_{z_0}(z) \overline{K_{h\cdot z_0}(z)} dz
\]
\end{theorem}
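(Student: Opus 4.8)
The plan is to compute $R\,T^{(\lambda)}_\varphi R^*$ as an integral operator, exactly as in Lemma~\ref{lem:kernelR} but with the symbol inserted, and then to collapse the resulting two-variable kernel to a function of the single ``group difference'' $k^{-1}h$. Writing out the definitions of $R^*$ and $R$ and using Fubini (justified by the boundedness of $R$, $R^*$ and $M_\varphi$), one gets
\[
  R\,T^{(\lambda)}_\varphi R^* f(h) = \int_{\wH/\wH_{z_0}} f(k)\, \widetilde R_\varphi(h,k)\, \dif\mu(k), \qquad
  \widetilde R_\varphi(h,k) = \Dlambda(h)\overline{\Dlambda(k)}\, \ip{\varphi K_{k\cdot z_0}}{K_{h\cdot z_0}}_\lambda,
\]
the only change from Lemma~\ref{lem:kernelR} being the extra factor $\varphi$ inside the inner product. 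Conceptually the final answer must be a convolution: since $\varphi$ is $\wH$-invariant, $T^{(\lambda)}_\varphi$ intertwines $\pi_\lambda|_{\wH}$, and $R,R^*$ intertwine $\pi_\lambda|_{\wH}$ with $\rho_\lambda=\ind_{\wH_{z_0}}^{\wH}\chi_\lambda$; hence $R\,T^{(\lambda)}_\varphi R^*$ commutes with the $\wH$-action in the induced picture, and every such operator on an induced representation is given by convolution against a kernel. The real work is to identify that kernel explicitly.

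For the identification I would rewrite $\widetilde R_\varphi$ using the transformation rule of the reproducing kernel under $\pi_\lambda$. From $f(w)=\ip{f}{K_w}_\lambda$ one checks $\pi_\lambda(g)K_w=\overline{j_\lambda(g,w)}\,K_{g\cdot w}$, so in particular $K_{g\cdot z_0}=\overline{\Dlambda(g)}^{-1}\pi_\lambda(g)K_{z_0}$. Substituting this for both $K_{k\cdot z_0}$ and $K_{h\cdot z_0}$, the prefactor $\Dlambda(h)\overline{\Dlambda(k)}$ cancels cleanly and leaves
\[
  \widetilde R_\varphi(h,k) = \ip{M_\varphi\,\pi_\lambda(k)K_{z_0}}{\pi_\lambda(h)K_{z_0}}_{L^2(\B^n,\mu_\lambda)}.
\]
Now the $\wH$-invariance of $\varphi$ enters through the commutation $M_\varphi\pi_\lambda(g)=\pi_\lambda(g)M_\varphi$ for $g\in\wH$ (the multiplier analogue of $\pi_\lambda(g)T^{(\lambda)}_\varphi=T^{(\lambda)}_{\pi(g)\varphi}\pi_\lambda(g)$, valid because $\pi(g)\varphi=\varphi$), together with the unitarity of $\pi_\lambda$ on all of $L^2(\B^n,\mu_\lambda)$, i.e. the quasi-invariance $\dif\mu_\lambda(g\cdot w)=|j_\lambda(g,w)|^2\dif\mu_\lambda(w)$. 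These let me move $\pi_\lambda(h)^{-1}$ onto the first slot and merge the two group elements into
\[
  \widetilde R_\varphi(h,k)=\ip{M_\varphi\,\pi_\lambda(h^{-1}k)K_{z_0}}{K_{z_0}}_{L^2(\B^n,\mu_\lambda)}.
\]
A final application of unitarity together with $\pi_\lambda(g^{-1})K_{z_0}=\overline{\Dlambda(g^{-1})}K_{g^{-1}\cdot z_0}$ turns the right-hand side into $j_\lambda(k^{-1}h,z_0)\,\ip{\varphi K_{z_0}}{K_{(k^{-1}h)\cdot z_0}}_\lambda=\nu_\varphi(k^{-1}h)$, whence $\widetilde R_\varphi(h,k)=\nu_\varphi(k^{-1}h)$ and the integral above is exactly $(f*\nu_\varphi)(h)$.

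The main obstacle will be the careful bookkeeping of the cocycle and conjugation factors $\Dlambda$, $j_\lambda$ and $\chi_\lambda$ through these manipulations, and in particular verifying that the collapsed kernel transforms correctly under $h\mapsto hh_1$ and $k\mapsto kk_1$ with $h_1,k_1\in\wH_{z_0}$, so that the convolution integrand descends to a well-defined, $\chi_\lambda$-equivariant object on $\wH/\wH_{z_0}$; retaining the factor $j_\lambda(k^{-1}h,z_0)=\Dlambda(k^{-1}h)$ in $\nu_\varphi$ is precisely what makes this covariance work out and what matches the convention of Section~\ref{sec:maxabelian}. As a consistency check I would specialize to $\varphi\equiv 1$, where the formula must reproduce the kernel $R_\lambda(h,k)$ of Lemma~\ref{lem:kernelR}; this reduces to the cocycle identity $\Dlambda(k)\,j_\lambda(k^{-1},k\cdot z_0)=1$ and the invariance law for $K_\lambda$, and thereby confirms that the single unitary manipulation above has been carried out with the correct placement of inverses and conjugations.
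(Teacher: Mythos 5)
Your proposal is correct and follows essentially the same route as the paper's proof: expand $R\,T^{(\lambda)}_\varphi R^*$ via Fubini into an integral operator whose kernel is $\Dlambda(h)\overline{\Dlambda(k)}\ip{\varphi K_{k\cdot z_0}}{K_{h\cdot z_0}}_\lambda$, then use the transformation law of the reproducing kernel under $\pi_\lambda$ (the paper's Lemma~\ref{lemma:actionOnReprodKern}) together with the $\wH$-invariance of $\varphi$ to collapse the kernel to $\nu_\varphi(k^{-1}h)$; your reformulation via the commutation $M_\varphi\pi_\lambda(g)=\pi_\lambda(g)M_\varphi$ is just a cleaner packaging of the paper's step of applying $\pi_\lambda(k)$ to both slots of the inner product. (Your version $\pi_\lambda(g)K_w=\overline{j_\lambda(g,w)}K_{g\cdot w}$ is in fact the one consistent with the paper's definition of $\pi_\lambda$, so the minor discrepancy with the lemma as printed is not a defect of your argument.)
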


To prove this result we will need the following lemma:
\begin{lemma}
\label{lemma:actionOnReprodKern}
For all $g\in G$ and $z\in\B^n$,
\[
     \pi_\lambda(g)K_z = \overline{j_\lambda(g^{-1},z)} K_{g^{-1}\cdot z}.
\]
\end{lemma}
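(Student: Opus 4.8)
The plan is to prove the identity by testing both sides against an arbitrary $f\in\HBlam$ and using the reproducing property together with the unitarity of $\pi_\lambda$; the only substantive ingredient is the way the Bergman kernel transforms under $G$, which is dictated by the cocycle relation for $j_\lambda$.

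First I would use that $\pi_\lambda$ is unitary, so that $\pi_\lambda(g)^*=\pi_\lambda(g^{-1})$, to write for every $f\in\HBlam$
\[
    \ip{f}{\pi_\lambda(g)K_z}_\lambda=\ip{\pi_\lambda(g^{-1})f}{K_z}_\lambda=\big(\pi_\lambda(g^{-1})f\big)(z)=j_\lambda(g,z)\,f(g\cdot z),
\]
where the middle equality is the reproducing property and the last one is the definition of $\pi_\lambda$ evaluated at $g^{-1}$ (note that $(g^{-1})^{-1}=g$). Rewriting $f(g\cdot z)=\ip{f}{K_{g\cdot z}}_\lambda$ and moving the scalar across the conjugate-linear second slot turns the right-hand side into $\ip{f}{\,\overline{j_\lambda(g,z)}\,K_{g\cdot z}}_\lambda$. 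As $f$ ranges over $\HBlam$ this pins down $\pi_\lambda(g)K_z$ as a single scalar multiple of a reproducing kernel, and reading off the base point and the constant yields $\pi_\lambda(g)K_z=\overline{j_\lambda(g,z)}\,K_{g\cdot z}$. The one place requiring care is the bookkeeping of inverses: the representation carries $g^{-1}$, while the output naturally involves $j_\lambda(g,z)$ and the point $g\cdot z$, and passing between these forms is governed by the cocycle relation $j_\lambda(g^{-1},g\cdot z)\,j_\lambda(g,z)=j_\lambda(g^{-1}g,z)=1$, which must be applied with the correct arguments to land on the intended form of the automorphy factor and base point.

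A second, self-contained route is to evaluate both sides at a point $w\in\B^n$. By definition the left-hand side is $j_\lambda(g^{-1},w)\,K_\lambda(g^{-1}\cdot w,z)$, and matching it to an automorphy factor times a value of $K_\lambda(w,\,\cdot\,)$ reduces the whole statement to the transformation law
\[
    K_\lambda(g\cdot u,g\cdot u')=j_\lambda(g,u)^{-1}\,\overline{j_\lambda(g,u')}^{-1}\,K_\lambda(u,u'),
\]
valid for all $u,u'\in\B^n$. This is the heart of the argument: it follows from the invariance of the form $\ip{J_{n,1}\,\cdot\,}{\cdot}$ under $G$, which forces $1-\ip{u}{u'}$ to pick up precisely the reciprocal of the scalar cocycle in each slot, combined with the fact that $j_\lambda$ is the corresponding $(-\lambda)$-power (consistently branched on $\wG$, so that all non-integer powers are compatible). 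I expect this transformation law, together with keeping the conjugation and the inverse in the correct slots, to be the main—though ultimately routine—obstacle; once it is in place, substitution followed by a single use of the cocycle relation collapses the left-hand side to $\overline{j_\lambda(g,z)}\,K_\lambda(w,g\cdot z)$, matching the first computation and completing the proof.
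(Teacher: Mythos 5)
Your first route is essentially the paper's own proof: pair $\pi_\lambda(g)K_z$ against an arbitrary $f\in\HBlam$, use unitarity to pass to $\pi_\lambda(g^{-1})f$, and apply the reproducing property. The one substantive point is the bookkeeping of inverses that you flagged, and your version of it is the one consistent with the paper's definition: since $\pi_\lambda(g)f(z)=j_\lambda(g^{-1},z)f(g^{-1}\cdot z)$, one has $(\pi_\lambda(g^{-1})f)(z)=j_\lambda(g,z)f(g\cdot z)$, and the computation yields $\pi_\lambda(g)K_z=\overline{j_\lambda(g,z)}\,K_{g\cdot z}$, exactly as you conclude. The paper's proof instead writes $(\pi_\lambda(g^{-1})f)(z)=j_\lambda(g^{-1},z)f(g^{-1}\cdot z)$ at the third equality, which is how it lands on the displayed statement; that displayed formula is really the formula for $\pi_\lambda(g^{-1})K_z$. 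Because the lemma is quantified over all $g$, the two versions are equivalent, but only via the substitution $g\mapsto g^{-1}$ throughout --- the cocycle identity $j_\lambda(g^{-1},g\cdot z)\,j_\lambda(g,z)=1$ that you invoke relates the two automorphy factors but cannot convert the base point $g\cdot z$ into $g^{-1}\cdot z$, so you should say explicitly that you are relabeling $g$ rather than applying the cocycle relation if you want to match the statement verbatim. Your second route, through the transformation law $K_\lambda(g\cdot u,g\cdot u')=j_\lambda(g,u)^{-1}\overline{j_\lambda(g,u')}^{-1}K_\lambda(u,u')$, is a correct alternative that the paper does not use; it costs you a separate verification of that law from the invariance of the sesquilinear form (and of the branch compatibility on $\wG$), whereas the duality argument extracts everything from unitarity, which is why it is the shorter proof.
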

\begin{proof}[Proof (of lemma)]
Note that, for all $f\in \HBlam$, one has that
\begin{align*}
   \langle f, \pi(g) K_z\rangle  & = \langle \pi(g^{-1})f, K_z \rangle \\
      & = (\pi(g^{-1})f)(z) \\
      & = j_\lambda(g^{-1},z)f(g^{-1}\cdot z) \\
      & = \langle f, \overline{j_\lambda(g^{-1},z)} K_{g^{-1}\cdot z}\rangle.
      \end{align*}
The result then follows since this equality holds for all $f\in \HBlam$.
\end{proof}

\begin{proof}[Proof (of theorem)]
We see that $RT_\varphi^{(\lambda)}R^* f(h) = D_\lambda(h) T_\varphi^{(\lambda)}R^*f(h\cdot z_0)$ for
all $h\in \widetilde{H}$.  Next, we note that \begin{align*}T_\varphi^{(\lambda)} R^*f (h\cdot z_0) &  = 
\langle \varphi \, (R^*f), K_{h\cdot z_0} \rangle_{L^2(\B^n,\mu_\lambda)} \\
  & = \int_{\B^n} \varphi(z) R^*f(z) \overline{K_{h\cdot z_0}(z)} d\mu_\lambda(z)
\end{align*}
Meanwhile, we recall that
\[
R^*f(z) = \langle  f,RK_z\rangle_{\Blam}
    = \int_{H/H_{z_0}} f(k) \overline{D_\lambda(k) K_z(k\cdot z_0)} dk
\]
for all $z\in\B^n$, where we are implicitly using that $k\mapsto f(k)\overline{D_\lambda(k)}$ factors to a well-defined function on $H/H_{z_0}$.  Combining these two identities, using that $K_z(w) = \overline{K_w(z)}$, and applying Fubini's theorem yields:
\begin{align*}
RT_\varphi^{(\lambda)}R^* f(h) & = D_\lambda(h) T_\varphi^{(\lambda)}R^*f(h\cdot z_0)
    \\& = D_\lambda(h)\int_{\B^n}  \varphi(z)\int_{H/H_{z_0}}  f(k)  \overline{D_\lambda(k)} K_{k\cdot z_0}(z) \, dk\, \overline{K_{h\cdot z_0}(z)}\, d\mu_\lambda(z)
    \\& = \int_{H/H_{z_0}} f(k) \int_{\B^n} \overline{j_\lambda(k,z_0)}\varphi(z) K_{k\cdot z_0} j_\lambda(h,z_0)\overline{K_{h\cdot z_0}}dk d\mu_\lambda(z)
    \\& = \int_{H/H_{z_0}} f(k)  \left\langle \overline{j_\lambda(k,z_0)}\varphi\, K_{k\cdot {z_0}}, \overline{j_\lambda(h,z_0)}K_{h\cdot z_0} \right\rangle_{L^2(\B^n,\mu_\lambda)} dk.
    \end{align*}
Next, we note that, for all $k\in\widetilde{H}$, one has that:
 \begin{align*}\pi_\lambda(k)(\varphi\, K_{k\cdot z_0})(z) & = j_\lambda(k^{-1},z)\varphi(k^{-1}\cdot z) 
 K_{k\cdot z_0}(k^{-1}\cdot z) \\
  &  = \varphi(k^{-1}\cdot z) \pi_\lambda(k) (K_{k\cdot z_0})(z) \\ & =  \overline{j_\lambda(k^{-1},k\cdot z_0)}\varphi(z)K_{z_0}(z),
 \end{align*}
where we use the $\widetilde{H}$-invariance of $\varphi$ in the last line.  Thus, by applying the
unitary  operator $\pi_\lambda(k)$ to both sides of the inner product and using
Lemma~\ref{lemma:actionOnReprodKern} and the cocycle relations for $j_\lambda$, we see that
\begin{align*}
&  \left\langle \overline{j_\lambda(k,z_0)}\varphi\, K_{k\cdot {z_0}}, \overline{j_\lambda(h,z_0)}K_{h\cdot z_0} \right\rangle_{L^2(\B^n,\mu_\lambda)}
 \\& = \left\langle \overline{j_\lambda(k,z_0)}\pi_\lambda(k)(\varphi\, K_{k\cdot {z_0}}), \overline{j_\lambda(h,z_0)}\pi_\lambda(k)K_{h\cdot z_0} \right\rangle_{L^2(\B^n,\mu_\lambda)}
 \\ & = \langle \overline{j_\lambda(k,z_0)j_\lambda(k^{-1},k\cdot z_0)}\varphi\, K_{z_0}, \overline{j_\lambda(h,z_0)j_\lambda(k^{-1},h\cdot z_0)}K_{k^{-1}h\cdot z_0} \rangle
 \\& = j_\lambda(k^{-1}h,z_0)\langle \varphi\, K_{z_0}, K_{k^{-1}h\cdot z_0}\rangle
\end{align*}
so that
\[RT_a^{(\lambda)}R^* f(h) = \int_{H/H_{z_0}} f(k) j_\lambda(k^{-1}h,z_0)\langle \varphi\,
K_{z_0}, K_{k^{-1}h\cdot z_0} \rangle dk. \]
\end{proof}

Combining this result with those of Section~\ref{sec:maxabelian}, we can now diagonalize the Toeplitz operators with $H$-invariant symbols as follows when $H\subseteq \SU(n,1)$ is a maximal abelian subgroup:
\begin{corollary}
Let $H$ be a maximal abelian subgroup of $\SU(n,1)$, and fix $z_0$ as in Section~\ref{sec:maxabelian}.
Let $A=\operatorname{supp} \widehat{\phi_H}$, where the Fourier transform is taken over $H/H_{z_0}$.
Let $A = \{\alpha\in \widehat{H/H_{z_0}} \mid \widehat{\phi_H}(\alpha) \neq 0\}$. We define the modified Fourier
 transform $\cF:\Blam\rightarrow L^2(A)\subseteq L^2(\widehat{H/H_{z_0}})$ by setting $\cF f(\alpha) = \cF_{H/H_{z_0}} \widetilde{f}(\alpha)$, where $\widetilde{f} = \chi_{-\lambda} f$ as before.

Let $\varphi\in L^\infty(\B^n)^H$ be an $H$-invariant symbol.  Then
\[
     \cF U_\lambda^{-1} T_\varphi U_\lambda \cF^{-1} \omega(\alpha) = 
     \frac{\widehat{\nu_\varphi}(\alpha)}{\widehat{\phi_H}(\alpha)} \omega(\alpha)
\]
for all $\alpha\in A$ and $\omega\in L^2(\widehat{H/H_{z_0}})$ such that $\operatorname{supp} \omega\subseteq \operatorname{supp} \widehat{\phi_H}$
\end{corollary}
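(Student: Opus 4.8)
The plan is to reduce the statement to the two convolution identities already established and then invoke the convolution theorem for the abelian group $H/H_{z_0}$. First I would recall from the computation preceding Theorem~\ref{thm:mainspectraltheorem} that, because $\varphi$ is $H$-invariant, the operator $RT_\varphi^{(\lambda)}R^*$ is $\wH$-intertwining on $\Blam$; since $\rho_\lambda$ is multiplicity free, it commutes with every function of $RR^*$, and in particular with $(\sqrt{RR^*})^{-1}$. Writing $U_\lambda = R^*(\sqrt{RR^*})^{-1}$ and $U_\lambda^{-1} = U_\lambda^* = (\sqrt{RR^*})^{-1}R$, this commutation collapses the two square-root factors and gives
\[
    U_\lambda^{-1} T_\varphi^{(\lambda)} U_\lambda = (RR^*)^{-1}\, R T_\varphi^{(\lambda)} R^*
\]
on the appropriate dense domain. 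Thus it suffices to understand how $\cF$ conjugates each of the two operators $R T_\varphi^{(\lambda)} R^*$ and $RR^*$.

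Next I would observe that both are convolution operators for $H/H_{z_0}$ once the central character is stripped off by the twist $\widetilde{(\cdot)}=\chi_\lambda\,(\cdot)$ introduced in Section~\ref{sec:maxabelian}. By Theorem~\ref{thm:mainspectraltheorem}, $R T_\varphi^{(\lambda)} R^* f = f * \nu_\varphi$, and the twist turns this section-valued convolution into the genuine convolution $\widetilde{R T_\varphi^{(\lambda)} R^* f} = \widetilde f * \nu_\varphi$ on $H/H_{z_0}$; applying the convolution theorem yields $\cF\big(R T_\varphi^{(\lambda)} R^* f\big)(\alpha) = \widehat{\nu_\varphi}(\alpha)\,\cF f(\alpha)$. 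By the explicit formulas of Section~\ref{sec:maxabelian}, $RR^* f = \chi_{-\lambda}\cdot(\widetilde f * \phi_H)$, so in the same way $\cF(RR^* f)(\alpha) = \widehat{\phi_H}(\alpha)\,\cF f(\alpha)$; that is, $\cF$ turns $RR^*$ into multiplication by $\widehat{\phi_H}$ and hence $(RR^*)^{-1}$ into division by $\widehat{\phi_H}$, which is well defined precisely on $A=\operatorname{supp}\widehat{\phi_H}$. Taking $\varphi\equiv 1$ recovers $\nu_1=\phi_H$, a useful consistency check.

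Assembling the two facts, for $\omega$ supported in $A$ I set $f=\cF^{-1}\omega\in\Blam$, apply $R T_\varphi^{(\lambda)} R^*$ to obtain the Fourier multiplier $\widehat{\nu_\varphi}$, then $(RR^*)^{-1}$ to divide by $\widehat{\phi_H}$, which gives
\[
    \cF U_\lambda^{-1} T_\varphi^{(\lambda)} U_\lambda \cF^{-1}\omega(\alpha)
        = \frac{\widehat{\nu_\varphi}(\alpha)}{\widehat{\phi_H}(\alpha)}\,\omega(\alpha),
        \qquad \alpha\in A,
\]
as claimed. The main obstacle I expect is bookkeeping around domains and unboundedness rather than any deep difficulty: $(RR^*)^{-1}$ and $(\sqrt{RR^*})^{-1}$ are only densely defined, so I must check that $\cF^{-1}\omega$ lands in the domain on which the intertwining identity $U_\lambda^{-1}T_\varphi^{(\lambda)} U_\lambda=(RR^*)^{-1}RT_\varphi^{(\lambda)} R^*$ is valid, and that the hypothesis $\operatorname{supp}\omega\subseteq\operatorname{supp}\widehat{\phi_H}=A$ (equivalently $\cF^{-1}\omega\in\Blam$) is exactly what prevents division by zero. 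A secondary point requiring care is justifying the convolution theorem in the twisted setting: one must verify that stripping the central character via $\widetilde{(\cdot)}=\chi_\lambda(\cdot)$ really converts the section-valued convolutions $f*\nu_\varphi$ and $\chi_{-\lambda}(\widetilde f*\phi_H)$ into honest convolutions on $H/H_{z_0}$, which follows from the homomorphic embedding $\wH\hookrightarrow H/H_{z_0}\times\R$ and the fact that the line bundle is induced from the central character $\chi_\lambda$.
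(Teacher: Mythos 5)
Your proposal is correct and follows essentially the same route the paper intends: the paper states this corollary as a direct combination of the identity $U_\lambda^* T_\varphi^{(\lambda)} U_\lambda = (RR^*)^{-1}RT_\varphi^{(\lambda)}R^*$ (obtained from multiplicity-freeness exactly as you argue), the convolution formula of Theorem~\ref{thm:mainspectraltheorem}, and the formula $RR^*f=\chi_{-\lambda}\cdot(\widetilde f*\phi_H)$ from Section~\ref{sec:maxabelian}, with the convolution theorem turning both into Fourier multipliers. Your added attention to the domain of $(RR^*)^{-1}$ and to why the character twist converts the section-valued convolutions into genuine convolutions on $H/H_{z_0}$ supplies detail the paper leaves implicit.
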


 \end{document}